\newtheorem{thm}{Theorem}[section]
\theoremstyle{definition}
\newtheorem{df}[thm]{Definition}
\newtheorem{rem}[thm]{Remark}
\def\d{\mathrm{d}}
\def\f{\varphi}
\def\e{\varepsilon}
\def\pr{\right )}
\def\le{\left (}
\def\R{\mathbb{R}}
\def\Z{\mathbb{Z}}
\def\mm{\kern +2pt \raisebox{+0.5 pt}{$\shortmid$}\kern -2pt\hbox{$\multimap$}\kern +2pt}
\def\H1{H^1\le\Omega,\mathbb{R}^N\pr}
\def\h2{H^2\le\Omega,\mathbb{R}^N\pr}
\def\h10{H^1_0\le\Omega,\mathbb{R}^N\pr}
\def\Pr{\mathrm{Pr}}
\def\o{\omega}
\def\t{\theta}
\def\Ra{\mathrm{Ra}}
\def\rot{\mathrm{rot}\,}
\def\div{\mathrm{div}\,}
\def\ov{\overline}
\def\wh{\widehat}
\author{Grzegorz \L ukaszewicz}
\address{University of Warsaw, Mathematics Department, ul.Banacha 2, 02-957 Warsaw, Poland}
\email{glukasz@mimuw.edu.pl}
\author{Jakub Siemianowski}
\address{Faculty of Mathematics and Computer Sciences, Nicolaus Copernicus University, Chopina 12/18, 87-100 Toru\'n, Poland}
\email{jsiem@mat.umk.pl}
\title{Existence of Solutions for Thermomicropolar Fluid}
\date{}
\begin{document}
\maketitle

\section{Introduction}
The theory of micropolar fluids is a generalization of the Navier-Stokes model in the sense that it takes into account the microstructure of the fluid.
The theory is expected to provide a~mathematical model for the non-Newtonian fluid behaviour observed in certain fluids such as~polymers, colloidal fluids, liquid crystals, blood, ferro liquid, real fluids with suspensions, which is~more realistic.
It~was introduced by Eringen in \cite{eringen}.
The~theory of thermomicropolar fluids, proposed by Eringen in \cite{eringen2}, extends the theory of micropolar fluid by including the heat conduction and heat dissipation effects.

\section{Formulation of the problem}
We consider a certain simplification of the two-dimensional  thermomicropolar fluid equations introduced in \cite{eringen2}.
The governing system of equations describing thermomicropolar fluids, after nondimentionalization, is 
\begin{equation}\label{row.thermomicropolar}
\begin{aligned}
\frac{1}{\Pr} \le u_t + \le u \cdot \nabla \pr u \pr + \nabla p &= \Delta u + 2N^2 \rot\o + e_2\Ra T,\\
\div u&=0,\\
\frac{1}{\Pr} \le \o_t + \le u\cdot \nabla \pr \o \pr + 4N^2\o &= \frac{1}{L^2}\Delta \o + 2N^2 \rot u,\\
T_t + u \cdot \nabla T &= \Delta T  + D \rot \o \cdot \nabla T,
\end{aligned}
\end{equation}
where $u=(u_1,u_2)$ is the velocity field, $p$ is the pressure, $\o$ is the microrotation and $e_2$ is the unit upward vector $(0,1)\in\R^2$.
The positive constants $N^2<1$, $L^2$, $D$ are related with viscosity coefficients, the Prandtl number $\Pr$ describes the relative importance of kinematic viscosity over thermal diffusivity and the Rayleigh number $\Ra$ measuring measuring the ratio of overall buoyancy force to the damping coefficients.
To bring no confusion we explain
\[
\div u = \dfrac{\partial u_1}{\partial x_1}+\dfrac{\partial u_2}{\partial x_2},\quad \rot u := \dfrac{\partial u_2}{\partial x_1}-\dfrac{\partial u_1}{\partial x_2},\quad \rot \o := \le \dfrac{\partial \o}{\partial x_2}, - \dfrac{\partial \o}{\partial x_1}\pr.
\]
We assume that the fluids occupy the (nondimensionalized) region 
\[
\Omega_\infty:= (-\infty,\infty)\times (0,1).
\]
The system \eqref{row.thermomicropolar} is equipped with the following boundary conditions
\[
u=0\restriction_{x_2 = 0,1},\quad\o\restriction_{x_2=0,1}=0,\quad T\restriction_{x_2=0}=1\quad\text{and}\quad T\restriction_{x_2=1} = 0
\]
with $l$-periodicity in the $x_1$-direction assumed.
The initial conditions are 
\[
u\restriction_{t=0} = u_0,\quad\o\restriction_{t=0} = \o_0,\quad T\restriction_{t=0} = T_0\quad\text{for }x=(x_1,x_2)\in\Omega_\infty.
\]

The system is a model for convection of micropolar fluids of a layer of fluids bounded by two horizontal one-dimensional parallel plates a distance $h$ apart with the bottom heated at temeprature $T_0$ and the top cooled at temperature $T_1<T_0$.
Therefore, the fluid motion is induced by differential heating.

\section{Preliminaries}
We define $\Omega\subset \Omega_\infty$ to be a rectangle box of length equal to the period
\[
\Omega:= (0,l)\times (0,1)
\]
and divide the boundary $\partial \Omega$ into two parts
\begin{gather*}
\Gamma_h:= \left \{ (x_1,x_2)\in \partial\Omega\mid x_2=0,1\right \},\\
\Gamma_l:= \left \{ (x_1,x_2)\in \partial\Omega\mid x_1=0\text{ or }x_1=l\right \}
\end{gather*}
We introduce
\begin{gather*}
\widetilde{V}_S:= \left \{ u \in C^\infty \le \Omega_\infty\pr^2 \mid \div u = 0, \, u\text{ is }l\text{-periodic in }x_1\text{-direction},\;u\restriction_{\Gamma_h}=0\text{ (as a limit)}\right\},
\\
\widetilde{V}:= \left \{\o \in C^\infty (\Omega) \mid   \o\text{ is }l\text{-periodic in }x_1\text{-direction},\;\o\restriction_{\Gamma_h}=0 \right\},\\
H_S:=\text{ closure of }\widetilde{V}_S\text{ in }L^2(\Omega)^2,\\
H:=L^2(\Omega),\text{ because }C^\infty _0(\Omega)\subset \widetilde{V},\\
V_S:=\text{ closure of }\widetilde{V}_S\text{ in }H^1(\Omega)^2\text{ and }\\
V:=\text{ closure of }\widetilde{V}\text{ in }H^1(\Omega).
\end{gather*}
Spaces $H_i$ are Hilbert spaces with the inner products
\[
(u,v):= (u,v)_{L^2}=\int_\Omega u(x)\cdot v(x)\d x, \quad u,v\in H_i, \; i=L,S.
\]
We have the Poincar\'{e} inequality (see \cite[pp. 51--52]{temam})
\[
|v|\leq k_1\|v\|,\quad\text{for }v\in V, V_S.
\]
Thus, $V$ and $V_S$ are Hilbert spaces when equipped with the inner product
\[
((u,v)) = \int_\Omega\nabla u\cdot \nabla v \quad u,v\in V_i,\; i=L,S.
\]
We denote the corresponding norms by
\[
|v|:= (v,v) ^{1/2}\quad \text{for }v\in H_i,\; i=L,S
\]
and
\[
\|v\| = ((v,v))^{1/2},\quad\text{for }v\in V_i,\; i =L,S.
\]

We define the standard trilinear forms (see \cite{robinson} or \cite{temam})
\begin{gather*}
b_S(u,v,w):=\sum_{i,j=1}^2\int_\Omega u_i\frac{\partial v_j}{\partial x_i}w_j\qquad\text{and}\qquad
b(u,v,w):=\sum_{i=1}^2\int_\Omega u_i\frac{\partial v}{\partial x_i}w.
\end{gather*}
We introduce the Laplace operator associated with our boundary conditions.
Let
\begin{equation}\label{wzor.dziedzina.laplasjanu}
D(A):= \left \{ v\in V \mid -\Delta v \in H \right \}
\end{equation}
and define $A:D(A)\to H$ by
\[
Au := -\Delta u,\quad u\in D(A). 
\]
Clearly, the eigenvectors $\{v_k\}_{k\geq 1}\subset D(A)$ of $A$ forms the orthonomal basis of $H$ and we have
\begin{equation}\label{laplasjan.wart.wekt.wl}
A v_k = \beta_k v_k \quad\text{and}\quad 0<\beta_1\leq \beta_2 \leq \ldots\leq \beta_k \to \infty.
\end{equation}

Since the domain $\Omega$ is a rectangle, these eigenvectors and eigenvalues can be explicitly determined: for $n\in\Z$, $m\geq 1$ we have
\begin{equation}\label{wart.wekt.wl}
\begin{aligned}
\beta_{nm}&=\le \frac{2n\pi}{l}\pr^2 + (2m\pi)^2,\\
v_{nm}&=\sqrt{\frac{2}{l}}\left [\sin \le \frac{2n\pi}{l}x \pr+\cos \le \frac{2n\pi}{l}x\pr\right ] \sin(m\pi y).
\end{aligned}
\end{equation}
It should be stressed that each $v_{nm}$ belongs to $C^\infty (\overline{\Omega})$.\footnote{This means $v_{nm}\in C^\infty(\Omega)$ and each derivative $D^\alpha v_{nm}$ admits continuous extnsion to $\overline{\Omega}$.}
We can renumber them so that $(v_k)_{k\geq 1}$ satisfy \eqref{laplasjan.wart.wekt.wl}.

Since we know the exact formulas for eigenfunctions we can prove the following results.
\begin{thm}[Regularity theorem]\label{tw.o.reg}
Let $ f\in H$ and let $u\in D(A)$ satisfy
\begin{equation}\label{row.reg}
A u = f.
\end{equation}
Then $u\in H^2(\Omega)$ and $\|u\|_{H^2}\leq C|f|$ for some constant $C>0$.
As a result, norms $\|u\|_{H^2}$ and $|Au|$ are equivalent on $D(A)$.
\end{thm}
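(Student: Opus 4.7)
My plan is to use the explicit eigensystem \eqref{wart.wekt.wl} through a Galerkin approximation. Writing $u = \sum_k u_k v_k$ and $f = \sum_k f_k v_k$ in the orthonormal basis of $H$ and applying $A$ termwise gives $u_k = f_k/\beta_k$, so the spectral identity $\sum_k \beta_k^2 u_k^2 = |f|^2$ is immediate. The remaining task is to upgrade this to a genuine $H^2$-bound on $u$.

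The crucial step is the identity
\[
\int_\Omega |\Delta \varphi|^2\,\d x \;=\; \int_\Omega \Bigl(|\partial_{x_1}^2 \varphi|^2 + 2|\partial_{x_1}\partial_{x_2}\varphi|^2 + |\partial_{x_2}^2 \varphi|^2\Bigr)\,\d x,
\]
valid for any $\varphi\in C^\infty(\overline\Omega)$ which is $l$-periodic in $x_1$ and vanishes on $\Gamma_h$. It is obtained by expanding $|\Delta\varphi|^2$ and applying integration by parts twice to the mixed term $\int \partial_{x_1}^2\varphi\cdot\partial_{x_2}^2\varphi$; the boundary contributions vanish face by face --- on $\Gamma_h$ because the outward normal is vertical while $\varphi$ and $\partial_{x_1}\varphi$ both vanish there, and on $\Gamma_l$ because opposite faces cancel by $l$-periodicity. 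In particular $\|D^2 \varphi\|_{L^2}^2 = |A\varphi|^2$ on such functions.

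Next I apply this identity to the finite sums $u_N := \sum_{k=1}^N u_k v_k$, which lie in $C^\infty(\overline\Omega)$ and satisfy the required boundary conditions (since each $v_{nm}$ does). This yields $\|D^2 u_N\|_{L^2} = |Au_N| \leq |f|$, and together with the Poincar\'e-type bounds $|u_N|, \|u_N\| \leq c|f|$ it shows that $\{u_N\}$ is bounded in $H^2(\Omega)$. Since $u_N \to u$ in $H^1$ (because $\sum_k (1+\beta_k)u_k^2 = |u|^2 + \|u\|^2 < \infty$), a weakly convergent subsequence in $H^2$ must have the weak limit $u$; hence $u\in H^2$ and, by weak lower semicontinuity of the $H^2$-norm, $\|u\|_{H^2}\leq C|f|$. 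The reverse inequality $|Au|\leq C\|u\|_{H^2}$ is trivial, so the two norms are equivalent on $D(A)$.

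The chief subtlety is the integration-by-parts identity in the middle paragraph: the rectangle has corners and the boundary conditions are mixed periodic/Dirichlet, so one must verify that every boundary term produced by the two integrations by parts indeed vanishes. This is a routine face-by-face check, but it is the one place where the geometry of $\Omega$ and the specific boundary conditions really enter the argument; the explicit regularity $v_{nm}\in C^\infty(\overline\Omega)$ noted after \eqref{wart.wekt.wl} is what allows the identity to be applied to each Galerkin approximation without further justification.
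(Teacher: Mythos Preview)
The paper does not actually supply a proof of this theorem; it only records the hint ``since we know the exact formulas for eigenfunctions we can prove the following results'' and then states the theorem without argument. Your proposal is a correct and complete realization of that hint: the spectral expansion $u=\sum_k (f_k/\beta_k)v_k$, the integration-by-parts identity $\|D^2\varphi\|_{L^2}=|A\varphi|$ for smooth $l$-periodic functions vanishing on $\Gamma_h$, the application to the truncations $u_N$, and the weak-$H^2$ limit are all sound. In particular your face-by-face check of the boundary terms is exactly the point where the explicit form \eqref{wart.wekt.wl} and the remark $v_{nm}\in C^\infty(\overline\Omega)$ are used, which is precisely what the paper's one-line hint is pointing at.

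One small cosmetic remark: once you have the identity on each $u_N$, you in fact get the orthogonality $\int_\Omega D^2 v_j\!:\!D^2 v_k\,\d x=(Av_j,Av_k)=\beta_j\beta_k\delta_{jk}$, so $\|D^2 u_N\|_{L^2}^2=\sum_{k\le N}\beta_k^2u_k^2$ is monotone in $N$ and the passage to the limit could be done by monotone convergence rather than weak compactness; but your weak-limit argument is perfectly fine.
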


For our purposes, we introduce the fractional power of the Laplacian.
Let 
\[
D(A^{3/2}) = \left\{u\in H\mid u = \sum_{k\geq 1} (u,v_k) v_k\text{ (in }H), \sum_{k\geq 1} (u,v_k)^2 \beta_k^3 <\infty\right\},
\]
and define
\[
A^{3/2} u = \sum_{k\geq 1} \beta_k^{3/2}(u,v_k) v_k.
\]
To make $D(A^{3/2})$ a Hilbert space we equip it with the inner product
\[
(u,v)_{D(A^{3/2})} = (A^{3/2}u,A^{3/2}v)
\]
which gives the corresponding norm 
\[
\|u\|_{D(A^{3/2})} = |A^{3/2} u|.
\]

\begin{thm}\label{tw.ulamkowy.laplasjan}
We have
\[
D(A^{3/2})\subset H^3(\Omega)
\]
and norms $\|\cdot\|_{H^3}$, $\|\cdot\|_{D(A^{3/2})}$ are equivalent on $D(A^{3/2})$.
\end{thm}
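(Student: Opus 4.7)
The plan is to bootstrap Theorem \ref{tw.o.reg} by differentiating in the tangential/periodic direction $x_1$. Given $u\in D(A^{3/2})$, I would set $w:=Au$, so that by the spectral definition
\[
\sum_{k\geq 1}\beta_k(w,v_k)^2=\sum_{k\geq 1}\beta_k^3(u,v_k)^2=|A^{3/2}u|^2<\infty,
\]
which places $w$ in $D(A^{1/2})=V$. In particular $w\in H^1(\Omega)$ vanishes on $\Gamma_h$ in the trace sense, is $l$-periodic in $x_1$, and satisfies $\|w\|=|A^{3/2}u|$. From Theorem \ref{tw.o.reg} applied to $-\Delta u=w\in H$ we already have $u\in H^2(\Omega)$ with $\|u\|_{H^2}\leq C|A^{3/2}u|$; the task is to gain one further derivative.

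The key step is to verify that $\partial_{x_1}u\in D(A)$. Since $x_1$ is tangential to $\Gamma_h$ and is the periodic direction, differentiation in $x_1$ preserves both defining properties of $V$: the trace of $\partial_{x_1}u$ on $\Gamma_h$ is the tangential derivative of the zero trace of $u$, and $l$-periodicity is preserved. Moreover
\[
-\Delta(\partial_{x_1}u)=\partial_{x_1}(-\Delta u)=\partial_{x_1}w\in L^2(\Omega),
\]
so $\partial_{x_1}u\in D(A)$ and a second application of Theorem \ref{tw.o.reg} yields
\[
\|\partial_{x_1}u\|_{H^2}\leq C|\partial_{x_1}w|\leq C\|w\|_{H^1}\leq C\|w\|=C|A^{3/2}u|,
\]
where the last step uses the Poincar\'e inequality to absorb $|w|$ by $\|w\|$. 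This controls the mixed third derivatives $\partial_{x_1}^3 u$, $\partial_{x_1}^2\partial_{x_2}u$ and $\partial_{x_1}\partial_{x_2}^2u$. For the remaining $\partial_{x_2}^3u$ I would use the PDE algebraically: from $-\Delta u=w$ we get $\partial_{x_2}^2u=-w-\partial_{x_1}^2u$, hence
\[
\partial_{x_2}^3u=-\partial_{x_2}w-\partial_{x_1}^2\partial_{x_2}u\in L^2(\Omega),
\]
with norm bounded by $C|A^{3/2}u|$ thanks to $w\in H^1$ and the previous estimate. Combining all contributions gives $u\in H^3(\Omega)$ with $\|u\|_{H^3}\leq C|A^{3/2}u|$. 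The reverse inequality is immediate: $|A^{3/2}u|^2=|A^{1/2}(Au)|^2=\|Au\|^2=\int_\Omega|\nabla\Delta u|^2\leq C\|u\|_{H^3}^2$.

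The main obstacle is the rigorous verification that $\partial_{x_1}u\in V$ for a general $u\in D(A^{3/2})$, i.e.\ that tangential differentiation really preserves the Dirichlet/periodic boundary conditions at the level of weak derivatives rather than only for smooth representatives. A clean way to discharge this is to argue first on the finite spectral truncations $u^N=\sum_{k\leq N}(u,v_k)v_k$: each $v_{nm}$ lies in $C^\infty(\overline{\Omega})$ and satisfies the boundary conditions by inspection of the explicit formula \eqref{wart.wekt.wl}, so the truncations are smooth functions realizing the required boundary behavior and every derivative manipulation above is legitimate for them. Since all the estimates are uniform in $N$, one passes to the limit $u^N\to u$ in $D(A^{3/2})$ by completeness and recovers the $H^3$ bound, together with the fact that the weak derivative $\partial_{x_1}u$ inherits the boundary trace and periodicity of its approximants.
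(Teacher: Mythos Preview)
The paper does not actually supply a proof of this theorem; it only states it after the remark ``since we know the exact formulas for eigenfunctions we can prove the following results,'' so the intended argument is a direct Fourier-series computation using \eqref{wart.wekt.wl}: expand $u=\sum_{n,m}c_{nm}v_{nm}$, observe that each $\partial^\alpha v_{nm}$ with $|\alpha|\le 3$ is an explicit trigonometric product whose $L^2$-norm is comparable to $\beta_{nm}^{|\alpha|/2}$ and which is orthogonal to $\partial^\alpha v_{n'm'}$ for $(n',m')\neq(\pm n,m)$, and conclude $\|u\|_{H^3}^2\sim\sum\beta_{nm}^3c_{nm}^2=|A^{3/2}u|^2$.

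Your route is different in spirit and is correct. Rather than computing term by term, you run a PDE bootstrap: apply Theorem~\ref{tw.o.reg} to $-\Delta u=Au\in V$, differentiate tangentially in the periodic $x_1$-direction and reapply Theorem~\ref{tw.o.reg} to $\partial_{x_1}u$, then recover $\partial_{x_2}^3u$ algebraically from the equation. This is more conceptual and more portable---it would work on any strip with one periodic direction given only $H^2$ elliptic regularity, without explicit eigenfunctions. The one place where your argument and the paper's implicit one reconverge is your last paragraph: to justify rigorously that $\partial_{x_1}u\in V$ and that $\partial_{x_1}$ commutes with $\Delta$ at the available regularity, you pass through the smooth spectral truncations $u^N=\sum_{k\le N}(u,v_k)v_k$, which is precisely where the explicit form of the $v_{nm}$ (in particular that $\partial_{x_1}v_{nm}$ again vanishes on $\Gamma_h$ and is $l$-periodic) does real work. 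That reduction is legitimate and the uniform-in-$N$ estimates close the argument.
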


Now, we introduce the Stokes operator $A_S$, see \cite{robinson} or \cite{temam} for details.
Let 
\[
D(A_S):= \left\{u\in V_S \mid \exists w\in H_S,\; \forall \f\in V_S\; (w,\f) = ((u,\f)) \right\}
\]
and define
\[
A_S(u) = w.
\]
It is known that $A_S=-P\Delta$, where $P$ stands for the Helmholtz--Leray projector from $L^2(\Omega)^2$ onto $H_S$.

We present two versions of the Gagliardo-Nirenberg inequality --- see \cite[Thm 5.8]{adams_fournier}:
\begin{gather}
\|v\|_{L^4}\leq k_2 |v|^{1/2}\|v\|_{H^1}^{1/2},\qquad\text{for }v\in H^1(\Omega),\label{G.N.4}\\
\|v\|_{L^\infty}\leq k_3|v|^{1/2}\|v\|^{1/2}_{H^2},\qquad\text{for }v\in H^2(\Omega).\label{G.N.infty}
\end{gather}
Combinig \eqref{G.N.4} with the Poincar\'e inequlity yields the so-called Ladyzhenskaya's inequality
\begin{equation}\label{ladyzhenskaya}
\|v\|_{L^4}\leq k_4 |v|^{1/2}\|v\|^{1/2},\qquad \text{for }v\in V,\,V_S,
\end{equation}
and combining \eqref{G.N.4} with Theorem \ref{tw.o.reg} gives
\begin{equation}\label{nier.grad}
\|\nabla v\|_{L^4}\leq k_5\|v\|^{1/2}|Av|^{1/2},\qquad v\in D(A).
\end{equation}
Similarly, combining \eqref{G.N.infty} with the regularity theorems for the Laplace (Theorem \ref{tw.o.reg}) operator and for the Stokes operator (see \cite[pp. 835--836]{bouk_luk_real}) we get
\begin{gather}
\|v\|_{L^\infty}\leq k_6|v|^{1/2}|Av|^{1/2},\qquad\text{for }v\in D(A),\label{Agmon}\\
\|u\|_{L^\infty}\leq k_7|u|^{1/2}|A_Su|^{1/2},\qquad\text{for }u\in D(A_S).\label{Agmon.S}
\end{gather}

\section{Initial data in $L^2$}
Following a traditional approach, we change the temperature equation from \eqref{row.thermomicropolar} so that the perturbative variable will satisfy the homogeneous boundary conditions:
\[
\t:= T - (1-x_2).
\]
We also change the pressure $p$ to $p-\Ra (x_2-x_2^2)$ in the velocity equation in \eqref{row.thermomicropolar}.
These transforms \eqref{row.thermomicropolar} into
\begin{gather}
\frac{1}{\Pr} \le u_t + \le u \cdot \nabla \pr u \pr + \nabla p = \Delta u + 2N^2 \rot\o + e_2\Ra \t,\label{row.u.2}\\
\div u=0,\label{row.incompr.2}\\
\frac{1}{\Pr} \le \o_t + \le u\cdot \nabla \pr \o \pr + 4N^2\o = \frac{1}{L^2}\Delta \o + 2N^2 \rot u,\label{row.omega.2}\\
\t_t + u \cdot \nabla \t = \Delta \t  + D \rot \o \cdot \nabla \t + D\frac{\partial \o}{\partial x_1}+u_2\label{row.teta}
\end{gather}
on $\Omega\times(0,\infty)$, equipped with the boundary conditions
\begin{equation}\label{war.brzeg}
u\restriction_{\Gamma_h}=0,\quad
\o\restriction_{\Gamma_h}=0,\quad
\t\restriction_{\Gamma_h}=0
\end{equation}
and periodicity in the horizontal direction.
The initial conditions now read
\begin{equation}\label{war.pocz}
u(0) = u_0,\quad\o (0) = \o_0,\quad \t(0) = \t_0 = T_0-(1-x_2).
\end{equation}

\begin{df}
Let $T>0$,  $u_0\in H_S$, $\o_0\in H$ and $\t_0\in H$.
By a weak solution of problems \eqref{row.u.2}--\eqref{war.pocz} we mean a triple of functions $(u,\o,\t)$,
\begin{equation}\label{warunk.ist.L2}
\begin{aligned}
u&\in L^2(0,T;V_S)\cap C([0,T],H_S)\cap W^{1,2}(0,T;V_S^\ast),\\
\o&\in L^2(0,T;V)\cap C([0,T],H)\cap W^{1,2}(0,T;V^\ast),\\
\t&\in L^2(0,T;V)\cap L^\infty(0,T;H)\cap W^{1,2}(0,T;D(A^{3/2})^\ast)
\end{aligned}
\end{equation}
such that $u(0)=u_0$, $\o(0)=\o_0$, $\t(0)=\t_0$ and satisfying the following identities
\[
\frac{1}{\Pr}\le \frac{d}{dt}(u(t),\f) + b_S(u(t),u(t),\f)\pr + (\nabla u(t),\nabla \f) = 2N^2 (\rot \o(t),\f) + \Ra (\t(t) e_2,\f)
\]
for every $\f\in V_S$,
\[
\frac{1}{\Pr}\le \frac{d}{dt}(\o(t),\psi) + b(u(t),\o(t),\psi)\pr + 4N^2 (\o(t),\psi) + \frac{1}{L^2}(\nabla \o(t),\nabla \psi) = 2N^2 (\rot u(t),\psi)
\]
for every $\psi\in V$,
\[
\frac{d}{dt}(\t(t),\eta) + b(u(t),\t(t),\eta) + (\nabla\t(t),\nabla\eta)  = -D (\t(t),\rot \o(t) \cdot \nabla \eta)+ D \le\frac{\partial\o}{\partial x_1}(t),\eta\pr + (u_2,\eta)
\]
for every $\eta\in D(A^{3/2})$, in the sense of scalar distributions on $(0,T)$.
\end{df}

\begin{rem}
Identifying $H$ with its dual $H^\ast$ via the Riesz isomorphism leads to the following embeddings
\[
D(A^{3/2})\subset D(A) \subset V\subset H\simeq H^\ast \subset V^\ast \subset D(A)^\ast \subset D(A^{3/2})^\ast.
\]
Because $\t\in W^{1,2}(0,T;D(A^{3/2})^\ast)$ implies $\t\in C([0,T],D(A^{3/2})^\ast)$, the condition $\t(0)=\t_0$ is meaningful.
\end{rem}

\begin{thm}\label{tw.istnienie.L2}
Let $u_0\in H_S$, $\o_0$, $\t_0\in H$, and $T>0$.
There is a weak solution $(u,\o,\t)$ to \eqref{row.u.2}--\eqref{war.pocz}.
\end{thm}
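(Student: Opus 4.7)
\medskip
\noindent\textbf{Proof plan.}
The plan is to construct solutions by a Faedo--Galerkin scheme based on the two orthonormal systems already at our disposal: Stokes eigenfunctions $\{\varphi_k\}\subset D(A_S)$ for the velocity $u$, and Laplace eigenfunctions $\{v_k\}\subset D(A)$ from \eqref{wart.wekt.wl} for both $\omega$ and $\theta$. Writing $u_n=\sum_{k=1}^n a_k^n(t)\varphi_k$, $\omega_n=\sum_{k=1}^n b_k^n(t)v_k$, $\theta_n=\sum_{k=1}^n c_k^n(t)v_k$ and projecting \eqref{row.u.2}--\eqref{row.teta} onto the first $n$ basis elements yields a locally Lipschitz system of ODEs for $(a_k^n,b_k^n,c_k^n)$ with initial data given by the $L^2$-projections of $u_0,\omega_0,\theta_0$. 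The a~priori bounds below will extend these local solutions to all of $[0,T]$.

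First I would derive the energy estimates by testing the three projected equations against $u_n$, $\omega_n$, $\theta_n$ respectively. The advection terms $b_S(u_n,u_n,u_n)$ and $b(u_n,\cdot,\cdot)$ vanish by the divergence-free condition, which is the standard observation. The key additional cancellation is for the term $D\,\rot\omega_n\cdot\nabla\theta_n$ in \eqref{row.teta}: integration by parts gives
\[
\int_\Omega D\,\rot\omega_n\cdot\nabla\theta_n\,\theta_n\,\d x=\tfrac{D}{2}\int_\Omega \rot\omega_n\cdot\nabla(\theta_n^2)\,\d x=-\tfrac{D}{2}\int_\Omega\div(\rot\omega_n)\,\theta_n^2\,\d x=0,
\]
because $\div(\rot\omega_n)\equiv 0$ and the boundary terms vanish by the conditions \eqref{war.brzeg} combined with $l$-periodicity. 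After absorbing the coupling terms $(\rot\omega_n,u_n)$, $(\rot u_n,\omega_n)$, $\Ra(\theta_n e_2,u_n)$, $D(\partial_{x_1}\omega_n,\theta_n)$, $(u_{n,2},\theta_n)$ into the dissipation via Cauchy--Schwarz, Young and Poincar\'e, Gronwall's inequality provides uniform bounds
\[
\|u_n\|_{L^\infty(0,T;H_S)\cap L^2(0,T;V_S)},\;\|\omega_n\|_{L^\infty(0,T;H)\cap L^2(0,T;V)},\;\|\theta_n\|_{L^\infty(0,T;H)\cap L^2(0,T;V)}\le C.
\]

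Next I would estimate the time derivatives. For $u_{n,t}$ and $\omega_{n,t}$ the standard Ladyzhenskaya bound \eqref{ladyzhenskaya} on the convective terms yields $L^2(0,T;V_S^\ast)$ and $L^2(0,T;V^\ast)$ bounds. For $\theta_{n,t}$ the critical term is $D(\rot\omega_n\cdot\nabla\theta_n,\eta)$; integrating by parts once more and using $\div(\rot\omega_n)=0$ it equals $-D(\theta_n,\rot\omega_n\cdot\nabla\eta)$, which is bounded by $D|\theta_n|\,\|\omega_n\|\,\|\nabla\eta\|_{L^\infty}$. Since $D(A^{3/2})\hookrightarrow H^3\hookrightarrow W^{1,\infty}$ in dimension two by Theorem~\ref{tw.ulamkowy.laplasjan}, this gives a uniform bound on $\theta_{n,t}$ in $L^2(0,T;D(A^{3/2})^\ast)$. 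This explains the asymmetric regularity class chosen for $\theta$ in the definition, and is the main structural reason for introducing $A^{3/2}$.

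Having the bounds, I extract a subsequence with $u_n\rightharpoonup u$, $\omega_n\rightharpoonup\omega$, $\theta_n\rightharpoonup\theta$ weakly in the $L^2(V)$-type spaces and weakly-$\ast$ in the $L^\infty(H)$-spaces, and weakly in the corresponding dual spaces for the time derivatives. The Aubin--Lions lemma applied to the triples $V_S\Subset H_S\subset V_S^\ast$, $V\Subset H\subset V^\ast$, and $V\Subset H\subset D(A^{3/2})^\ast$ produces strong convergence of $u_n,\omega_n,\theta_n$ in $L^2(0,T;H)$. Passage to the limit in the linear terms is immediate; in the bilinears $b_S(u_n,u_n,\varphi)$ and $b(u_n,\omega_n,\psi)$ one combines strong $L^2(L^2)$ convergence with weak $L^2(L^2)$ convergence of the derivatives, which is standard.

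The step I expect to be the most delicate is passing to the limit in the term $-D(\theta_n,\rot\omega_n\cdot\nabla\eta)$ for a fixed $\eta\in D(A^{3/2})$. Since $\nabla\eta\in L^\infty$ is bounded in $x$ and $t$, and $\theta_n\to\theta$ strongly in $L^2(0,T;H)$ while $\rot\omega_n\rightharpoonup\rot\omega$ weakly in $L^2(0,T;L^2)$, a strong-times-weak argument yields convergence. Finally, $u_n(0)\to u_0$, $\omega_n(0)\to\omega_0$, $\theta_n(0)\to\theta_0$ by construction, and the embeddings in \eqref{warunk.ist.L2} (plus the remark on $D(A^{3/2})^\ast$) ensure that the limit $(u,\omega,\theta)$ attains the prescribed initial data in the required sense.
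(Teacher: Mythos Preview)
Your proposal is correct and follows essentially the same route as the paper: Galerkin scheme, energy estimates relying on the cancellation $\int_\Omega(\rot\omega_n\cdot\nabla\theta_n)\theta_n=0$ (you use $\div\rot=0$, the paper equivalently uses $\rot\nabla=0$, both with the same boundary/periodicity justification), Gronwall, then the dual estimate on $\theta_{n,t}$ via $|(\rot\omega_n\cdot\nabla\theta_n,\eta)|\le|\theta_n|\,\|\omega_n\|\,\|\nabla\eta\|_{L^\infty}$ together with Theorem~\ref{tw.ulamkowy.laplasjan}. In fact you go a bit further than the paper, which records only the a~priori bounds and defers the Aubin--Lions compactness and the passage to the limit to the references; your outline of that step (in particular the strong--weak argument for $-D(\theta_n,\rot\omega_n\cdot\nabla\eta)$ with fixed $\eta\in D(A^{3/2})$) is the right one.
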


\begin{proof}
In what follows we use the so-called Galerkin approximations of \eqref{row.u.2}---\eqref{row.teta}.
As it is standard, we will show only the a priori estimates.
The reader unfamiliar with this technique is encouraged to see \cite{robinson} or \cite{temam}.

Take the inner product in $\R^2$  of \eqref{row.u.2} and $u$, and integrate over $\Omega$
\begin{equation}\label{oszacowanie.energ.u}
\frac{1}{\Pr}\le \frac{\partial}{\partial t}u ,u \pr   + \frac{1}{\Pr} b_S(u, u, u) + \le A_S u,u\pr = 2N^2 \le \rot \o, u \pr + \Ra \le\t e_2, u \pr 
\end{equation}
We have 
\[
\le \frac{\partial}{\partial t}u ,u \pr = \frac{1}{2}\frac{d}{dt} |u |^2,\qquad
\le A_S u, u \pr = \|u\|^2.
\]
Since 
\[
b_S(u,v,w)=-b_S(u,w,v),\quad u,v,w\in V_S,
\] 
we obtain 
\begin{equation}
b_S(u,u,u)=0.
\end{equation}
We integrate by parts, use the Cauchy-Schwarz inequality and Young's inequality
\begin{equation}
2N^2\le \rot \o, u \pr = 2N^2\le \o,\rot u \pr \leq 2N^2|\o||\rot u| \leq 2N^2|\o|^2 + \frac{N^2}{2}|\rot u|^2  
\end{equation}
and, because of $\div u =0$, we get 
\[
\rot\rot u =-\Delta u.
\]
Hence, we obtain
\[
2N^2(\rot \o,u)\leq 2N^2|\o|^2 + \frac{N^2}{2}\|u\|^2,
\]
and similarly
\begin{equation}
\Ra\le \t e_2 , u \pr \leq \Ra|\t||u| \leq \frac{k_1^2\Ra }{2(1-N^2)} |\t|^2 +\frac{1-N^2}{2k_1^2\Ra}|u|^2\leq \frac{k_1^2\Ra}{2(1-N^2)} |\t|^2 +\frac{1-N^2}{2\Ra}\|u\|^2,
\end{equation}
where we used the Poincar\'e inequality.
As a result \eqref{oszacowanie.energ.u} turns into
\begin{equation}\label{oszacowanie.energ.u2}
\frac{1}{\Pr}\frac{d}{dt} |u |^2 + \|u\|^2 \leq 4N^2 |\o|^2 + \frac{k_1^2\Ra^2}{1-N^2} |\t|^2.
\end{equation}

Multiply \eqref{row.omega.2} by $\o$ and integrate over $\Omega$
\begin{equation}\label{oszacowanie.energ.omega}
\frac{1}{\Pr}\le \frac{\partial}{\partial t}\o , \o \pr + \frac{1}{\Pr} b(u, \o, \o) + 4N^2 |\o|^2 + \frac{1}{L^2}\le A \o ,\o \pr = 2N^2 \le \rot u, \o \pr.
\end{equation}
As above, we have
\[
\le \frac{\partial}{\partial t}\o , \o \pr =\frac{1}{2}\frac{d}{dt}|\o|^2,\qquad
 b(u, \o, \o ) =0,\qquad
\le A \o ,\o \pr = \|\o\|^2 .
\]
and
\[
2N^2\le  \rot u, \o \pr \leq 2N^2|\rot u| |\o| \leq 2N^2|\o|^2 + \frac{N^2}{2}\|u\|^2.
\]
From the above and \eqref{oszacowanie.energ.omega}, we obtain
\begin{equation}\label{oszacowanie.energ.omega2}
\frac{1}{Pr}\frac{d}{dt}|\o|^2 +\frac{2}{L^2}\|\o\|^2 +4N^2|\o|^2 \leq N^2 \|u\|^2.
\end{equation}

Multiply \eqref{row.teta} by $\t$ and integrate over $\Omega$
\begin{equation}\label{row.teta.2}
\le\frac{\partial }{\partial t}\t, \t\pr   + b(u,\t,\t)  + \le A \t, \t\pr = D\int_\Omega (\rot\o\cdot \nabla \t ) \t + D\le \frac{\partial \o}{\partial x_1},\t\pr + \le u_2, \t\pr.
\end{equation}
We have
\[
\le\frac{\partial }{\partial t}\t, \t\pr = \frac{1}{2} \frac{d}{dt}\left | \t \right | ^2,\qquad
b(u,\t,\t) = 0,\qquad
\le A \t, \t \pr = \| \t \|^2,
\]
and we integrate by parts to get
\[
 \le \frac{\partial \o}{\partial x_1},\t\pr  = \le \o, \frac{\partial \t}{\partial x_1}\pr \leq |\o |\left |\frac{\partial \t}{\partial x_1} \right | \leq |\o | \| \t \| \leq D |\o|^2 + \frac{1}{4D}\|\t\|^2.
\]
We estimate
\[
( u_2, \t ) \leq |(u)_2| |\t| \leq |u| |\t| \leq |u|^2 + \frac{1}{4}|\t|^2.
\]
We assume for the moment that $\t$ is smooth enough so that the integral
\[
\int_\Omega \le\rot \o\cdot \nabla  \t \pr \t
\]
is well-defined.
In fact, we estimate the Galerkin-Faedo approximations $\t_m(x,t)= \sum_{j=1}^m \t_{mj}(t) v_j(x)$ of $\t$ which are of class $C^\infty(\overline{\Omega})$ with respect to the varibale $x$, see \eqref{wart.wekt.wl}.
Integrating by parts leads to
\begin{multline*}
\int_\Omega \le\rot \o\cdot \nabla  \t \pr \t = \frac{1}{2} \int_\Omega \le \frac{\partial}{\partial x_2}\o,-\frac{\partial}{\partial x_1}\o\pr \cdot \nabla \le \t ^2\pr  \\
=-\frac{1}{2}\int_\Omega \o\rot\le \nabla \le \t ^2 \pr\pr + \frac{1}{2}\int_{\partial\Omega}\o \le\frac{\partial \t}{\partial x_1} \t n_2 - \frac{\partial \t}{\partial x_2} \t n_1 \pr\d S =: I_1 + I_2
\end{multline*}
The symmetry of second derivatives implies $I_1 = 0$.
Since each $\dfrac{\partial v_k}{\partial x_2}$ is $l$-periodic in $x_1$ (see \eqref{wart.wekt.wl}), the boundary conditions yields $I_2=0$.
Therefore, \eqref{row.teta.2} leads to
\begin{equation}\label{oszacowanie.energ.teta}
 \frac{d}{dt}\left | \t \right | ^2 +  \| \t \|^2 \leq  2D^2|\o|^2 + 2|u|^2 .
\end{equation}

Add \eqref{oszacowanie.energ.u2}, \eqref{oszacowanie.energ.omega2} and \eqref{oszacowanie.energ.teta}
\begin{equation}\label{oszacowanie.energ.uomegateta}
\frac{d}{dt}\le |u|^2 + |\o|^2 + |\t|^2\pr + c_2 \le \|u\|^2 + \|\o\|^2 +\|\t\|^2\pr \leq c_3 \le |u|^2 + |\o |^2 + |\t|^2 \pr,
\end{equation}
where 
\[
c_2:=\min\{(1-N^2)\Pr, 2\Pr/L^2, 1 \}\quad\text{and}\quad
c_3:= \max \{(\Ra^2 \Pr)/(1-N^2), 2D^2,2 \}.
\]
Write
\[
y(t) =  |u(t)|^2 + |\o(t)|^2 + |\t(t)|^2\qquad \text{and}\qquad
\alpha(t) = \|u(t)\|^2 + \|\o(t)\|^2 +\|\t(t)\|^2
\]
Multiply \eqref{oszacowanie.energ.uomegateta} by $\exp (-c_3t)$ 
\[
\frac{d}{dt} \le y(t)e^{-c_3t} \pr  + c_2e^{-c_3t}\alpha(t) \leq 0
\]
and integrate from $0$ to some $s>0$ 
\[
y(s) + c_2 \int_0^s e^{c_3(s-t)}\alpha(t)\d t \leq e^{c_3s}y(0).
\]
Recall that 
\[
y(0) =(\leq) \left| u_0 \right |^2 +\left| \o_0 \right |^2 + \left| \t_0 \right |^2 .
\]
Fix $T>0$, the above inequality implies that
\begin{equation}\label{przestrzenie}
\begin{aligned}
u\in L^\infty (0,T;H_S)\cap L^2(0,T;V_S),\\
\o,\;\t\in L^\infty (0,T;H)\cap L^2(0,T;V)
\end{aligned}
\end{equation}
and 
\begin{equation}\label{oczacowania.norm}
\begin{aligned}
\|u\|^2_{L^\infty (0,T;H_S)}+\|\o\|^2_{L^\infty (0,T;H)}+\|\t\|^2_{L^\infty (0,T;H)}\leq e^{c_3T}\le|u_0|^2 +|\o_0|^2 + |\t_0|^2 \pr ,\\
\|u\|^2_{L^2(0,T;V_S)}+\|\o\|^2_{L^2(0,T;V)}+\|\t\|^2_{L^2(0,T;V)}\leq \frac{e^{c_3T}}{c_2}\le|u_0|^2 +|\o_0|^2 + |\t_0|^2 \pr.
\end{aligned}
\end{equation}

In order to prove the continuity of the functions 
\[
[0,T]\ni t \mapsto u(t) \in H_S\quad\text{and}\quad [0,T]\ni t \mapsto \o(t) \in H
\]
it is enough to show that $\dfrac{d u}{d t} \in L^2(0,T;V_S^\ast)$ and $\dfrac{d \o}{dt}\in L^2(0,T;V^\ast)$, see \cite[Thm 3, Chapter 5.9]{evans}.
We will show that $\dfrac{d u}{dt}\in L^2(0,T;V_S^\ast)$, the proof that $\dfrac{d \o}{dt}\in L^2(0,T;V)$ is similar.
From \eqref{row.u.2} we obtain
\[
\frac{1}{\Pr}\frac{d u}{dt}= -\frac{1}{\Pr} (u\cdot\nabla)u +\Delta u + 2 N^2\rot \o + \Ra \t e_2.
\]
Take $v\in L^2(0,T;V_S)$ then we have
\begin{equation}\label{oszacowanie.poch.u}
 \bigg._{V_S^\ast}\left\langle \frac{d u }{dt}, v\right \rangle _{V_S} = - b_S(u,u,v) - \Pr((u,v))  + 2\Pr N^2 (rot\o,v) + \Ra\Pr(\t e_2,v).
\end{equation}
We use the inequality \eqref{ladyzhenskaya} to show that
\[
|b_S(u,u,v)|= |b_S(u,v,u)| \leq \|u\|_{L^4}\|v\|\|u\|_{L^4}\leq k_4^2|u|\|u\|\|v\|.
\]
From \eqref{oszacowanie.poch.u} we get
\begin{multline*}
\int_0^T \bigg._{V_S^\ast}\left\langle \frac{du}{dt}, v \right\rangle_{V_S} \leq   k_4^2\|u\|_{L^\infty(0,T;H_S)}\|u\|_{L^2(0,T;V_S)}\|v\|_{L^2(0,T;V_S)} + \Pr \|u\|_{L^2(0,T;V_S)}\|v\|_{L^2(0,T;V_S)} \\
+2k_1\Pr N^2 \|\o\|_{L^2(0,T;V)}\|v\|_{L^2(0,T;V_S)}+ k_1\Ra\Pr \|\t\|_{L^2(0,T;V)}\|v\|_{L^2(0,T;V_S)},
\end{multline*}
where we used the Poincar\'{e} inequality in the last two terms.
Hence and by \eqref{przestrzenie}, we have 
\[
\left\|\dfrac{du}{dt}\right\|_{L^2(0,T;V_S^\ast)}\leq C,
\]
where $C$ depends only on $\Omega$, $u_0$, $\o_0$, $\t_0$ and $T>0$.

Recall that we have
\begin{equation}\label{row.pochodna.teta}
\frac{\partial \t }{\partial t}= -u\cdot\nabla\t +\Delta \t+ D\rot\o\cdot \nabla \t + D\frac{\partial \o}{\partial x_1} + u_2.
\end{equation}
We show that $\dfrac{d\t}{dt}\in L^2(0,T;D(A^{3/2}))$.
Every term from the right-hand side of \eqref{row.pochodna.teta} may be bounded in $L^2(0,T;V^\ast)$ as in \eqref{oszacowanie.poch.u} except from
\[
 D\rot\o\cdot \nabla \t.
\]
We write $X:=D(A^{3/2})$ for short.
Take $\f \in X$, integrate by parts and apply H\"{o}lder's inequality to~get
\begin{multline*}
_{X^\ast}\langle \rot \o \cdot \nabla \t, \f \rangle_X= \int_\Omega \le \rot \o\cdot \nabla \t \pr \f  = -  \int_\Omega \le \rot \o \cdot \nabla \f\pr \t \\
\leq |\t|\le \int _\Omega \left |\rot \o \cdot \nabla \f \right |^2\d x\pr ^\frac{1}{2} \leq |\t| \|\o\| \|\nabla\f\|_{L^\infty (\Omega)}.
\end{multline*}
In view of Theorem \ref{tw.ulamkowy.laplasjan}, $\nabla \f \in H^2(\Omega)^2$ so the Sobolev embedding yields 
\begin{equation}\label{nierownosc.X}
\|\nabla \f\|_{L^\infty (\Omega)} \leq \tilde{C}\|\nabla \f\|_{H^2(\Omega)} \leq \tilde{C}\|\f\|_{H^3(\Omega)} \leq C\|\f\|_X,
\end{equation}
where the last inequality follows from Theorem \ref{tw.ulamkowy.laplasjan}.
Summing up, we have
\[
_{X^\ast}\langle \rot \o\cdot \nabla \t, \f \rangle_X \leq C|\t|\|\o\|\|\f\|_X
\]
so 
\[
\|\rot\o\cdot\nabla \t\|_{X^\ast} \leq C|\t| \|\o\|.
\]
Therefore, we have 
\begin{multline*}
\|\rot \o \cdot \nabla\t\|_{L^2(0,T;X^\ast)} = \le \int_0^T \|\rot \o \cdot \nabla\t\|_{X^\ast}^2 \pr^\frac{1}{2} \leq C\le \int_0^T |\t|^2 \|\o\|^2 \pr ^\frac{1}{2} \\
\leq C \|\t\|_{L^\infty (0,T;H)}\|\o\|_{L^2(0,T;V)}.
\end{multline*}
The continuous embeddings $V^\ast\subset X^\ast$ yields the continuous embedding
\[
L^2(0,T;V^\ast) \subset L^2(0,T;X^\ast)
\]
and all the bounds on the components of \eqref{row.pochodna.teta}  made in the space $L^2(0,T;V^\ast)$ are valid in the space $L^2(0,T;X^\ast)$.
As a result, we get
\[
\left \|\frac{d \t}{d t} \right\|_{L^2(0,T;X^\ast)}\leq C,
\]
where $C$ depends only on $\Omega$, $T>0$ and initial conditions \eqref{war.pocz}.
\end{proof}
\section{Initial data in $H^1$}

\begin{df}
Let $T>0$,  $u_0\in V_S$, $\o_0\in V$ and $\t_0\in V$.
By a strong solution of problems \eqref{row.u.2}--\eqref{war.pocz} we mean a triple of functions $(u,\o,\t)$,
\begin{align*}
u&\in  L^2(0,T;D(A_S))\cap C([0,T],V_S) \cap W^{1,2}(0,T;H_S),\\
\o,\;\t&\in L^2(0,T;D(A))\cap C([0,T],V) \cap W^{1,2}(0,T;H),
\end{align*}
such that $u(0)=u_0$, $\o(0)=\o_0$, $\t(0)=\t_0$ and satisfying the following identities
\[
\frac{1}{\Pr}\le \frac{d}{dt}(u(t),\f) + b_S(u(t),u(t),\f)\pr + (-\Delta u(t), \f) = 2N^2 (\rot \o(t),\f) + \Ra (\t(t) e_2,\f)
\]
for every $\f\in H_S$,
\[
\frac{1}{\Pr}\le \frac{d}{dt}(\o(t),\psi) + b(u(t),\o(t),\psi)\pr + 4N^2 (\o(t),\psi) + \frac{1}{L^2}(-\Delta \o(t), \psi) = 2N^2 (\rot u(t),\psi)
\]
for every $\psi\in H$,
\[
\frac{d}{dt}(\t(t),\eta) + b(u(t),\t(t),\eta) + (-\Delta\t(t),\eta)  = -D (\rot \o(t) \cdot \nabla \t,\eta) + (u_2,\eta) + D \le\frac{\partial\o}{\partial x_1}(t),\eta\pr
\]
for every $\eta\in H$, in the sense of scalar distributions on $(0,T)$.
\end{df}
\begin{thm}\label{tw.istnienie.H1}
Let $u_0\in V_S$, $\o_0\in V$, $\t_0\in V$ and $T>0$.
There is a unique strong solution  $(u(t),\o(t),\t(t))$ of \eqref{row.u.2}--\eqref{war.pocz}.
Moreover, the strong solution depends continuously on the initial data, namely the following map is continuous
\[
V_S\times V\times V \ni (u_0,\o_0,\t_0) \mapsto (u,\o,\t)\in C([0,T],V_S\times V\times V).
\]
\end{thm}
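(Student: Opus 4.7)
I would follow a Faedo--Galerkin scheme analogous to the proof of Theorem~\ref{tw.istnienie.L2}, but derive the a priori estimates at the $V_S\times V\times V$ level. On the Galerkin approximations $(u_m,\omega_m,\theta_m)$, in addition to the $L^2$-energy bounds \eqref{oczacowania.norm}, I would test \eqref{row.u.2}, \eqref{row.omega.2} and \eqref{row.teta} against $A_Su_m$, $A\omega_m$ and $A\theta_m$. The standard convective trilinear forms are handled by the classical two-dimensional absorption trick: combining Ladyzhenskaya \eqref{ladyzhenskaya} with \eqref{nier.grad},
\[
|b_S(u,u,A_Su)| \leq \|u\|_{L^4}\|\nabla u\|_{L^4}|A_Su| \leq k_4k_5\,|u|^{1/2}\|u\|\,|A_Su|^{3/2} \leq \tfrac{1}{4}|A_Su|^2 + C|u|^2\|u\|^4,
\]
and similarly for $b(u,\omega,A\omega)$ and $b(u,\theta,A\theta)$. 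The remaining linear coupling terms $(\rot\omega,A_Su)$, $(\rot u,A\omega)$, $(\theta e_2,A_Su)$, $(\partial_1\omega,A\theta)$, $(u_2,A\theta)$ are controlled by Cauchy--Schwarz, Young and the Poincar\'e inequality.

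The genuinely new difficulty is the nonlinear temperature--microrotation coupling $(\rot\omega\cdot\nabla\theta,A\theta)$. H\"older with exponents $(4,4,2)$ and \eqref{nier.grad} applied to both $\omega$ and $\theta$ yield
\[
|(\rot\omega\cdot\nabla\theta,A\theta)| \leq \|\nabla\omega\|_{L^4}\|\nabla\theta\|_{L^4}|A\theta| \leq k_5^2\,\|\omega\|^{1/2}|A\omega|^{1/2}\|\theta\|^{1/2}|A\theta|^{3/2},
\]
which after a careful Young split contributes $\varepsilon(|A\omega|^2+|A\theta|^2)$ to be absorbed by the dissipation on the left plus a remainder polynomial in $\|\omega\|,\|\theta\|$. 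Summing the three $V$-level identities yields a differential inequality of the form
\[
\tfrac{d}{dt}\Phi(t) + c\,\Psi(t) \leq g(t)\bigl(1+\Phi(t)\bigr)^\alpha,
\]
with $\Phi=\|u\|^2+\|\omega\|^2+\|\theta\|^2$, $\Psi=|A_Su|^2+|A\omega|^2+|A\theta|^2$ and $g\in L^1(0,T)$ thanks to \eqref{oczacowania.norm}. A (possibly logarithmic) Gronwall argument gives $\Phi\in L^\infty(0,T)$ and $\Psi\in L^1(0,T)$ uniformly in $m$. Reading $\partial_tu_m,\partial_t\omega_m,\partial_t\theta_m$ off the equations bounds them in $L^2(0,T;H)$, and standard compactness arguments pass to the limit producing the strong solution with the asserted regularity, continuity into $V_S\times V\times V$ following from \cite[Thm.~3, Ch.~5.9]{evans}.

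For uniqueness and continuous dependence, I would take two strong solutions $(u^i,\omega^i,\theta^i)$ with data $(u^i_0,\omega^i_0,\theta^i_0)$, $i=1,2$, set $U=u^1-u^2$, $W=\omega^1-\omega^2$, $\Theta=\theta^1-\theta^2$ and test the difference system against $(U,W,\Theta)$. Antisymmetry kills the trilinear terms $b_S(u^i,U,U),b(u^i,W,W),b(u^i,\Theta,\Theta)$, and every remaining nonlinear contribution carries at least one factor from the differences paired with a factor from one of the (known to be strong) solutions. Using \eqref{ladyzhenskaya}, \eqref{Agmon}, \eqref{Agmon.S} and the uniform $D(A)$-bounds on $(u^i,\omega^i,\theta^i)$, in particular for the critical coupling $(\rot W\cdot\nabla\theta^1+\rot\omega^2\cdot\nabla\Theta,\Theta)$, one arrives at
\[
\tfrac{d}{dt}\bigl(|U|^2+|W|^2+|\Theta|^2\bigr) + c\bigl(\|U\|^2+\|W\|^2+\|\Theta\|^2\bigr) \leq \psi(t)\bigl(|U|^2+|W|^2+|\Theta|^2\bigr),
\]
with $\psi\in L^1(0,T)$. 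Gronwall yields uniqueness and continuous dependence in $C([0,T],H_S\times H\times H)$; the upgrade to $C([0,T],V_S\times V\times V)$ claimed in the theorem is obtained by repeating the same argument at the $V$-level on the difference system, testing against $(A_SU,AW,A\Theta)$ and again using the time-integrable $D(A)$-bounds of the two solutions to keep the Gronwall coefficient integrable. The hardest step throughout is the coupling $\rot\omega\cdot\nabla\theta$, which requires the simultaneous exploitation of $D(A)$-regularity of both $\omega$ and $\theta$ in the Young absorption, both in the a priori estimate and in the stability estimate.
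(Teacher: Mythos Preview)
Your treatment of the coupling term $(\rot\omega\cdot\nabla\theta,A\theta)$ in the existence step does not close. After the H\"older/\eqref{nier.grad} bound you correctly reach
\[
|(\rot\omega\cdot\nabla\theta,A\theta)|\leq k_5^2\,\|\omega\|^{1/2}|A\omega|^{1/2}\|\theta\|^{1/2}|A\theta|^{3/2},
\]
but this expression is homogeneous of total degree $2$ in the pair $(|A\omega|,|A\theta|)$, so no Young splitting can reduce it to $\varepsilon(|A\omega|^2+|A\theta|^2)$ plus a remainder that is purely polynomial in $\|\omega\|,\|\theta\|$. Any split leaves a term of the type $C\|\omega\|^2\|\theta\|^2|A\omega|^2$ (or the symmetric one), and once you sum the three $V$-level inequalities this term is of the form $\Phi^2\Psi$ on the right. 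It cannot be absorbed into $c\Psi$ on the left without a smallness assumption on $\Phi$, and it is not of the form $g(t)(1+\Phi)^\alpha$ with $g\in L^1(0,T)$; a logarithmic Gronwall does not help here.

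The paper circumvents this by exploiting the triangular structure of the estimates: the $V$-level inequalities for $u$ and $\omega$ (your first two tested equations) involve $\theta$ only through $|\theta|^2$, which is already in $L^\infty(0,T)$ by \eqref{oczacowania.norm}. One therefore runs Gronwall on $\|u\|^2+\|\omega\|^2$ \emph{alone}, obtaining $\omega\in L^\infty(0,T;V)\cap L^2(0,T;D(A))$ first. Only then is the $\theta$-inequality treated: now the dangerous factor $\|\omega\|^2|A\omega|^2$ is a \emph{known} $L^1(0,T)$ function and acts as a legitimate Gronwall coefficient on $\|\theta\|^2$. This sequential two-step argument is the missing idea in your proposal.

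For uniqueness and continuous dependence your plan is essentially correct, but the preliminary $L^2$-level step is superfluous: since the theorem asserts continuity into $V_S\times V\times V$, you must in any case test the difference system against $(A_SU,AW,A\Theta)$, which is precisely what the paper does directly. At that level the coupling splits as $\rot W\cdot\nabla\ov\theta+\rot\wh\omega\cdot\nabla\Theta$, and the $|A\cdot|$ factors carrying $V$-norm prefactors now come from the \emph{known} solutions $\ov\theta,\wh\omega$, hence sit in the $L^1$ Gronwall coefficient; the obstruction above does not recur.
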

\begin{proof}
The existence of solutions is based on the Galerkin-Faedo approximations as in the proof of Theorem \ref{tw.istnienie.L2}.
Since the assumptions of Theorem \ref{tw.istnienie.L2} are stronger than those of Theorem \ref{tw.istnienie.L2}, we may assume that $\o$, $\t\in L^\infty(0,T;H)\cap L^2(0,T;V)$.

First, we focus on $u$ and $\o$ equations. 
Multiply \eqref{row.u.2} by $A_Su$ and integrate over $\Omega$ 
\begin{equation}\label{row.entr.u}
\frac{1}{\Pr}\le \frac{\partial}{\partial t}u ,A_S u \pr   + \frac{1}{\Pr} b_S(u, u, A_S u) + | A_S u|^2 = 2N^2 \le \rot \o, A_Su \pr + \Ra \le\t e_2, A_S u \pr .
\end{equation} 
By \eqref{Agmon.S} and by H\"{o}lder's inequality with $(\infty,2,2)$ rates we have
\[
\Pr^{-1}|b_S(u,u,A_Su)|\leq \Pr^{-1}\|u\|_{L^\infty} \|u\||A_Su|\leq k_7\Pr^{-1}|u|^{1/2}\|u\||A_Su|^{3/2}\leq \frac{1}{6}|A_Su|^2 + c_1|u|^2\|u\|^4,
\]
where we used Young's inequality.
We estimate
\begin{gather*}
2N^2(\rot \o,A_Su) \leq 2N^2|\rot\o||A_Su|\leq \frac{1}{6}|A_S u|^2 +c_2 \|\o\|^2,\\
\Ra (\t e_2,A_S u) \leq \Ra|\t||A_Su|\leq \frac{1}{6}|A_Su|^2 + c_3|\t|^2.
\end{gather*}
From the above estimates and from \eqref{row.entr.u} we get
\begin{equation}\label{oszacowanie.entr.u}
\frac{d}{dt}\|u\|^2 + \Pr|A_Su|^2 \leq  2\Pr \le c_1|u|^2\|u\|^4 + c_2 \|\o\|^2 + c_3|\t|^2\pr.
\end{equation}

Multiply \eqref{row.omega.2} by $A\o$ and integrate over $\Omega$
\begin{equation}\label{row.entr.omega}
\frac{1}{\Pr}\le\frac{\partial}{\partial t}\o,A\o\pr + \frac{1}{\Pr}b(u,\o,A\o) + 4N^2(\o,A\o) + \frac{1}{L^2}|A\o|^2 = 2N^2 (\rot u, A\o).
\end{equation}
H\"{o}lder's inequality, \eqref{ladyzhenskaya}, \eqref{nier.grad} and Young's inequality yield
\[
|b(u,\o,A\o)| \leq \|u\|_{L^4}\|\nabla \o\|_{L^4} |A\o| \leq k_4k_5 |u|^{1/2}\|u\|^{1/2}\|\o\|^{1/2}|A\o|^{3/2}\leq \frac{1}{4L^2}|A\o|^2 +c_4|u|^2\|u\|^2\|\o\|^2.
\]
We have
\[
2N^2 (\rot u, A\o)\leq 2N^2|\rot u||A\o|\leq \frac{1}{4L^2}|A\o|^2 + c_5 \|u\|^2.
\]
Hence, we may transform \eqref{row.entr.omega} into
\begin{equation}\label{oszacowanie.entr.omega}
\frac{d}{dt}\|\o\|^2+8N^2 \|\o\|^2 +\frac{\Pr}{L^2}|A\o|^2 \leq 2\Pr \le c_4 |u|^2\|u\|^2\|\o\|^2 + c_5\|u\|^2\pr.
\end{equation}
We add \eqref{oszacowanie.entr.u} and \eqref{oszacowanie.entr.omega} to get
\begin{multline}\label{oszacowanie.entr.u.omega}
\frac{d}{dt} \le \|u\|^2 + \|\o\|^2 \pr + \Pr |A_Su|^2 + \frac{\Pr}{L^2}|A\o|^2 \\
\leq C_1 \le |u|^2\|u\|^4 + \|\o\|^2 +|\t|^2 +  |u|^2\|u\|^2\|\o\|^2  + \|u\|^2\pr,
\end{multline}
where $C_1:=2\Pr\cdot\max_{i=1\ldots 5}\{c_i\}$.
We drop out the terms $|A_Su|^2$ and $|A\o|^2$ in \eqref{oszacowanie.entr.u.omega}, and obtain
\begin{equation}\label{oszacowanie.nier.Gron}
\frac{d}{dt} \le \|u\|^2 + \|\o\|^2 \pr \leq C_1 \le \|u\|^2 + \|\o\|^2\pr \le |u|^2 \|u\|^2 + |u|^2\|\o\|^2 +1\pr + C_1 |\t|^2.
\end{equation}
Let us denote
\begin{gather*}
y(t):=\|u(t)\|^2 +\|\o(t)\|^2,\qquad\alpha(t):=C_1 |\t(t)|^2,\\
\beta(t):=C_1 \le |u(t)|^2 \|u(t)\|^2 + |u(t)|^2\|\o(t)\|^2 +1\pr,
\end{gather*}
then \eqref{oszacowanie.nier.Gron} turns into 
\begin{equation}\label{nier.do.Gronwalla}
y^\prime(s)\leq \alpha(s) + \beta(s)y(s).
\end{equation}
Since $\alpha$, $\beta\in L^1(0,T)$, we multiply \eqref{nier.do.Gronwalla} by $\exp\le - \int_0^s\beta(\tau)\d \tau \pr$ and integrate from $0$ to $t$
\begin{equation}\label{nier.po.Gronwalu.2}
y(t)\leq y(0)\exp \le \int_0^t\beta(\tau) \d \tau \pr + \int_0^t\alpha(s)\exp\le\int_s^t \beta(\tau)\d\tau\pr\d s \qquad t\in [0,T].
\end{equation}
The right hand-side of \eqref{nier.po.Gronwalu.2} is bounded, for every $t\in [0,T]$, so
\begin{equation}
u\in L^\infty(0,T;V_S)\qquad\text{and}\qquad\o\in L^\infty(0,T;V).
\end{equation}
Now, we integrate \eqref{oszacowanie.entr.u.omega} from $0$ to $T$ and get
\begin{equation}\label{ogr.Aomega.Au}
\int_0^T \Pr|A_Su|^2 + \frac{\Pr}{L^2}|A\o|^2 \leq C,
\end{equation}
for some constant $C>0$ depending only on $\Omega$, $T>0$ and intial data.
Thus 
\[
u\in L^2(0,T;D(A_S))\qquad\text{and}\qquad\o\in L^2(0,T;D(A)).
\]

Let us consider $\t$ equation.
Multiply \eqref{row.teta} by $A\t$ and integrate over $\Omega$
\begin{equation}\label{row.entr.teta}
\le \frac{\partial}{\partial t} \t,A\t\pr + b(u,\t,A\t) + |A\t|^2 = D\int_\Omega(\rot\o\cdot\nabla\t)A\t + D\le \frac{\partial \o}{\partial x_1},A\t\pr + (u_2,A\t).
\end{equation}
Proceeding as above, we have
\begin{gather*}
|b(u,\t,A\t)|\leq \|u\|_{L^4}\|\nabla \t\|_{L^4}|A\t|^2 \leq k_4k_5|u|^{1/2}\|u\|^{1/2}\|\t\|^{1/2}|A\t|^{3/2}\leq\frac{1}{8}|A\t|^2 + c_7|u|^2\|u\|^2\|\t\|^2,\\
D\le \frac{\partial \o}{\partial x_1},A\t\pr \leq D\|\o\||A\t|\leq \frac{1}{8}|A\t|^2 + c_8\|\o\|^2,\\
(u_2,A\t)\leq |u||A\t|\leq \frac{1}{8}|A\t|^2 + c_9|u|^2.
\end{gather*}
We use H\"{o}lder's inequality with $(4,4,2)$ rates, \eqref{nier.grad} and Young's inequality
\begin{multline}\label{oszac.rot.grad}
D\int_\Omega (\rot \o\cdot\nabla \t)A\t \leq D \|\nabla \o\|_{L^4}\|\nabla\t\|_{L^4}|A\t|\\
\leq Dk_5^2\|\o\|^{1/2}|A\o|^{1/2}\|\t\|^{1/2}|A\t|^{3/2}\leq \frac{1}{8}|A\t|^2 +c_{10}\|\o\|^2|A\o|^2\|\t\|^2.
\end{multline}
The equation \eqref{row.entr.teta} is transformed into
\begin{equation}\label{oszacowanie.entr.teta}
\frac{d}{dt}\|\t\|^2 + |A\t|^2 \leq C_2\le |u|^2\|u\|^2\|\t\|^2 + \|\o\|^2|A\o|^2\|\t\|^2 + \|\o\|^2 +|u|^2\pr,
\end{equation}
where $C_2:=2\max_{i=7,8,9,10}\{c_i\}$.
We argue as in \eqref{nier.po.Gronwalu.2} and \eqref{ogr.Aomega.Au} to obtain
\[
\t\in L^\infty (0,T;V)\cap L^2(0,T;D(A)).
\]

We will show that $\dfrac{d\t}{dt}\in L^2(0,T;H)$.
By \cite[Thm 3, Chapter 5.9]{evans}, since $\t\in L^2(0,T;D(A))$, this yields $\t\in C([0,T],V)$ (the same reasoning for $u$ and $\t$).
We only show $\dfrac{d\t}{dt}\in L^2(0,T;H)$ because $u$ and $\o$ cases are similar.
From \eqref{row.teta} we have
\begin{equation}\label{row.poch.teta.2}
\frac{d\t}{dt} = -u\cdot\nabla \t  -A\t +D\rot\o\cdot \nabla \t +D\frac{\partial \o}{\partial x_1} + u_2.
\end{equation} 
We use \eqref{ladyzhenskaya} and \eqref{nier.grad} to get
\[
|u\cdot\nabla\t| \leq \|u\|^2_{L^4}\|\nabla \t\|^2_{L^4} \leq k_4k_5 \|u\|^2\|\t\||A\t|
\]
and, similarly,
\[
|\rot\o\cdot\nabla\t|\leq \|\nabla \o\|^2_{L^4}\|\nabla \t\|^2_{L^4}\leq k_5^2\|\o\||A\o|\|\t\||A\t|.
\]
From \eqref{row.poch.teta.2} we obtain
\[
\begin{split}
\left\| \frac{d\t}{dt}\right\|_{L^2(0,T;H)}&\leq \le \int_0^T |u\cdot\nabla\t|^2 \pr^{1/2} + \le \int_0^T |\rot\o\cdot\nabla\t|^2 \pr^{1/2} + \|\t\|_{L^2(0,T;D(A))} \\
&+ D\|\o\|_{L^2(0,T;V)} + \|u\|_{L^2(0,T;H_S)}\\
&\leq C\big (\|u\|^2_{L^\infty(0,T;V_S)}\|\t\|_{L^\infty(0,T;V)}\|\t\|_{L^2(0,T;D(A))}\\
&+\|\o\|_{L^\infty (0,T;V)}\|\o\|_{L^2(0,T;D(A))}\|\t\|_{L^\infty (0,T;V)}\|\t\|_{L^2(0,T;D(A))} \\
&+ \|\t\|_{L^2(0,T;D(A))} + \|\o\|_{L^2(0,T;V)} + \|u\|_{L^2(0,T;H_S)}\big ),
\end{split}
\]
for a suitable $C>0$.
Thus $\dfrac{d\t}{dt}$ is bounded in ${L^2(0,T;H)}$.

Now, we show that the solution is unique and depends continuously on initial data.
Let $(\ov u_0,\ov \o_0, \ov \t_0)$ and $(\wh u_0, \wh\o_0,\wh\t_0)$ be two inital states, and $(\ov u,\ov\o,\ov\t)$, $(\wh u,\wh\o,\wh\t)$ be two  corresponding strong solutions.
If we set $(u,\o,\t)=(\ov u - \wh u,\ov\o-\wh\o,\ov \t - \wh\t)$, then $(u,\o,\t)$ satisfies
\begin{gather}
\frac{1}{\Pr}\frac{du}{dt} -\Delta u + \nabla p = 2N^2\rot\o +\Ra \t e_2 -\frac{1}{\Pr}(\ov u\cdot \nabla)\ov u + \frac{1}{\Pr}(\wh u \cdot \nabla)\wh u,\label{row.jedn.u}\\
\div u = 0,\notag\\
\frac{1}{\Pr}\frac{d\o}{dt} +4N^2\o - \frac{1}{L^2}\Delta \o = 2N^2\rot u -\frac{1}{\Pr}\ov u\cdot \nabla\ov \o + \frac{1}{\Pr}\wh u \cdot \nabla\wh \o,\label{row.jedn.omega}\\
\frac{d\t}{dt} -\Delta \t = D\rot \ov\o \cdot \nabla\ov\t - D\rot\wh\o\cdot\nabla\wh\t + D\frac{\partial \o}{\partial x_1} +u_2 - \ov u\cdot \nabla\ov \t + \wh u \cdot \nabla\wh \t.\label{row.jedn.teta}
\end{gather}
We have
\[
(\ov u\cdot \nabla)\ov u - (\wh u \cdot \nabla)\wh u = (\wh u\cdot\nabla )u + (u\cdot\nabla)\ov u,
\]
so by multiplying the equation \eqref{row.jedn.u} by $A_Su$ and integrating over $\Omega$ we get
\[
\frac{1}{2\Pr}\frac{d}{dt}\|u\|^2 +|A_Su|^2= 2N^2(\rot\o,A_Su) +\Ra(\t e_2,A_Su) - \frac{1}{\Pr}b_S(\wh u, u ,A_Su) - \frac{1}{\Pr}b_S(u,\ov u,A_Su).
\]
We estimate the right hand-side (term by term) like before
\begin{multline}\label{jedn.u}
\frac{d}{dt}\|u\|^2 +2\Pr|A_Su|^2\\
\leq \frac{\Pr}{4}|A_Su|^2 + d_1\|\o\|^2 + \frac{\Pr}{4}|A_Su|^2+\wh d_2 |\t|^2 + C|A_S\wh u|\|u\||A_Su| + k_7|u|^{1/2}\|\ov u\||A_Su|^{3/2}\\
\leq \frac{\Pr}{2}|A_Su|^2 +d_1\|\o\|^2 + d_2\|\t\|^2 +  \frac{\Pr}{4}|A_Su|^2 + d_3|A_S\wh u|^2\|u\|^2 + \frac{\Pr}{4}|A_Su|^2 + \wh d_4 |u|^2 \|\ov u\|^4 \\
\leq \Pr|A_Su|^2 + d_1\|\o\|^2 + d_2\|\t\|^2 + d_3 |A_S\wh u|^2\|u\|^2 + d_4\|u\|^2 \|\ov u\|^4 ,
\end{multline}
where we used the embedding $D(A_S)\subset L^\infty$ and switching from $\wh d_i$ to $d_i$ means that the Poincar\'e inequality was used.

We multiply \eqref{row.jedn.omega} by $A\o$ and integrate over $\Omega$ 
\[
\frac{d}{dt}\|\o\|^2 +\frac{2\Pr}{L^2}|A\o|^2 + 8N^2\Pr\|\o\|^2= 4N^2\Pr(\rot u,A\o) - 2b(\wh u, \o,A\o) - 2b(u,\ov \o,A\o).
\]
We estimate as before
\begin{multline}\label{jedn.omega}
\frac{d}{dt}\|\o\|^2 +\frac{2\Pr}{L^2}|A\o|^2 + 8N^2\Pr\|\o\|^2\\
\leq \frac{\Pr}{4L^2}|A\o|^2 + d_5\|u\|^2 + C|A_S\wh u|\|\o\||A\o| + C\|u\|_{L^4}\|\nabla\ov \o\|_{L^4}|A\o|\\
\leq \frac{\Pr}{4L^2}|A\o|^2 + d_5\|u\|^2 +\frac{\Pr}{4L^2}|A\o|^2 + d_6|A_S\wh u|^2\|\o\|^2 +\frac{\Pr}{4L^2}|A\o|^2 + d_7\|u\|^2|A\ov\o|^2\\
=\frac{3\Pr}{4L^2}|A\o|^2 + d_5\|u\|^2+ d_6|A_S\wh u|^2\|\o\|^2+ d_7\|u\|^2|A\ov\o|^2,
\end{multline}
where we used the embeddings $D(A_S)\subset L^\infty$, $H^1\subset L^4$ and regularity theorems.

Similarly, we multiply \eqref{row.jedn.teta} by $A\t$ and integrate over $\Omega$ 
\begin{multline*}
\frac{1}{2}\frac{d}{dt}\|\t\|^2 + |A\t|^2 = D \le \frac{\partial \o}{\partial x_1},A\t\pr +2(u_2,A\t) \\
 +D\int_\Omega(\rot \o\cdot\nabla\ov\t)A\t + D\int_\Omega(\rot\wh\o\cdot\nabla\t)A\t - b(\wh u, \t,A\t) - b(u,\ov\t,A\t) .
\end{multline*}
We estimate as above
\begin{multline}\label{jedn.teta}
\frac{d}{dt}\|\t\|^2 + 2|A\t|^2 \leq \frac{1}{6}|A\t|^2 +  d_8\|\o\|^2 + \frac{1}{6}|A\t|^2 + \wh d_9|u|^2 \\
+  C\|\o\|^{1/2}|A\o|^{1/2}\|\ov\t\|^{1/2}|A\ov\t|^{1/2}|A\t| + C\|\wh\o\|^{1/2}|A\wh\o|^{1/2}\|\t\|^{1/2}|A\t|^{3/2} \\
+ 2\|u\|_{L^4}\|\nabla\ov\t\|_{L^4}|A\t| + C|A_S\wh u|\|\t\||A\t|\\
\leq \frac{1}{3}|A\t|^2 + d_8\|\o\|^2 + d_9\|u\|^2 + \frac{1}{6}|A\t|^2 + C\|\o\||A\o|\|\ov\t\||A\ov\t| \\
+\frac{1}{6}|A\t|^2+d_{11}\|\wh\o\|^2|A\wh\o|^2\|\t\|^2 + C\|u\||A\ov\t||A\t| + \frac{1}{6}|A\t|^2 + d_{13}|A_S \wh u|^2\|\t\|^2\\
\leq |A\t|^2 +\frac{\Pr}{4L^2}|A\o|^2 + d_8\|\o\|^2 +d_9\|u\|^2 +  d_{10}\|\o\|^2\|\ov\t\|^2|A\ov\t|^2 + d_{11}\|\wh\o\|^2|A\wh\o|^2\|\t\|^2 \\
+d_{12}\|u\|^2|A\ov\t|^2 + d_{13}|A_S \wh u|^2\|\t\|^2,
\end{multline}
where we used the embeddings $H^1\subset L^4$, $D(A)\subset L^\infty$ and regularity results.
We add \eqref{jedn.u}, \eqref{jedn.omega} and \eqref{jedn.teta} and make some simple calculations
\begin{multline}\label{jedn.oszac.wsp}
\frac{d}{dt}\le \|u\|^2 + \|\o\|^2 + \|\t\|^2\pr + \Pr |A_S u|^2 + \frac{\Pr}{L^2}|A\o|^2 + 8N^2\Pr\|\o\|^2+ |A\t|^2 \\
\leq D \le \|u\|^2 + \|\o\|^2+\|\t\|^2\pr 
\le |A_S \wh u|^2 + \|\ov u\|^4 +|A\ov\o|^2+ |A\ov\t|^2 + \|\ov\t\|^2|A\ov\t|^2+ \|\wh\o\|^2|A\wh\o|^2+ 2 \pr ,
\end{multline}
where $D$ is the maximum of $d_1,\ldots, d_{13}$.

Let us denote 
\[
y:= \|u\|^2+\|\o\|^2+\|\t\|^2
\]
and 
\[
\alpha:= D\le |A_S \wh u|^2 + \|\ov u\|^4 +|A\ov\o|^2+|A\ov\t|^2+ \|\ov\t\|^2|A\ov\t|^2+ \|\wh\o\|^2|A\wh\o|^2+ 2\pr.
\]
Omitting some terms on the left hand-side of \eqref{jedn.oszac.wsp} yields
\[
y^\prime(t)\leq \alpha(t)y(t)
\]
and so
\begin{equation}\label{nier.jedn.ciaglosc}
y(t)\leq y(0)\exp{\int_0^T\alpha(s)\; \d s},\qquad t\in [0,T].
\end{equation}
Using \eqref{oczacowania.norm}, \eqref{nier.po.Gronwalu.2}, \eqref{ogr.Aomega.Au} and  \eqref{oszacowanie.entr.teta} we may find functions $g_1$, $g_2:\R^3 \to [0,\infty)$ which map bounded subsets of $\R^3$ onto bounded subsets of $[0,\infty)$, $g_1$, $g_2$ are increasing with respect to each variable and satisfy
\begin{equation}\label{oszacowanie.norm.3}
\begin{aligned}
\|u\|^2_{L^\infty(0,T;V_S)} + \|\o\|^2_{L^\infty(0,T;V)}+ \|\t\|^2_{L^\infty(0,T;V)}&\leq g_1(\|u_0\|,\|\o_0\|,\|\t_0\|),\\
\|u\|^2_{L^2(0,T;D(A_S))} + \|\o\|^2_{L^2(0,T;D(A))} + \|\t\|^2_{L^2(0,T;V)} &\leq g_2(\|u_0\|,\|\o_0\|,\|\t_0\|),
\end{aligned}
\end{equation}
where $(u,\o,\t)$ is a strong solution starting at $(u_0,\o_0,\t_0)$.
We use the regularity theorem for Stokes operator
\[
\|\ov u\|^2 \leq \|\ov u\|^2_{H^2} \leq C|A_S u|^2
\]
to get
\[
\alpha \leq C \le\|\ov u\|^2 + \|\ov\t\|^2 + \|\wh \o\|^2 +1\pr \le |A_S \wh u|^2 + |A\ov \o|^2 +|A\wh\o|^2+|A\ov\t|^2+1 \pr .
\]
We write $\wh g_1$ instead of $g_1(\|\wh u_0\|,\|\wh\o_0\|,\|\wh\t_0\|)$ for short and symbols $\ov g_1$, $\wh g_2$, $\ov g_2$ have similar meanings.
By \eqref{nier.jedn.ciaglosc} and \eqref{oszacowanie.norm.3}, we have
\begin{equation}\label{oszacowanie.koncowe}
\|u(t)\|^2+\|\o(t)\|^2+\|\t(t)\|^2 \leq \le\|u_0\|^2+\|\o_0\|^2+\|\t_0\|^2\pr\exp \le C\le T + \ov g_1+ \wh g_1\pr\le \ov g_2 + \wh g_2 +T\pr \pr.
\end{equation}
When $\ov u_0 = \wh u_0$, $\ov \o_0 = \wh\o_0$ and $\ov\t_0 = \wh\t_0$, \eqref{oszacowanie.koncowe} implies that the strong solution is unique.

To prove continuous dependence on initial data take $\e>0$ and $(\ov u_0,\ov\o_0,\ov\t_0)\in V_S\times V\times V$.
Let us denote
\[
M:=\max\left\{ g_1\le\|\ov u_0\| +1,\|\ov\o_0\| +1,\|\ov\t_0\| +1\pr, g_2\le\|\ov u_0\| +1,\|\ov\o_0\| +1,\|\ov\t_0\| +1\pr\right\} +T
\] and put
\[
\delta:= \min\left \{1, \e \exp\le -\frac{1}{2}C\le 4M^2\pr\pr\right\}.
\]
If
\[
\| (\ov u_0, \ov\o_0,\ov\t_0) - (\wh u_0,\wh\o_0,\wh\t_0)\|_{V_S\times V\times V}<\delta
\]
then
\[
\sup_{t\in [0,T]} \|u(t)\|^2 +\|\o(t)\|^2+\|\t(t)\|^2 \leq \e^2.\qedhere
\]
\end{proof}

\section{Mixed initial conditions}
\begin{thm}\label{tw.mieszane.war}
Let $T>0$,  $u_0\in H_S$, $\t_0\in H$ and $\o_0\in V$.
There is a solution  $(u,\o,\t)$ to \eqref{row.u.2}--\eqref{war.pocz} in the sense that
\begin{align*}
u&\in L^2(0,T;V_S)\cap C([0,T],H_S) \cap W^{1,2}(0,T;V_S^\ast),\\
\o&\in L^2(0,T;D(A))\cap C([0,T],V)\cap W^{1,2}(0,T;H),\\
\t&\in L^2(0,T;V)\cap C([0,T];H)\cap W^{1,2}(0,T;V^\ast)
\end{align*}
such that $u(0)=u_0$, $\o(0)=\o_0$, $\t(0)=\t_0$ and satisfying the following identities
\begin{equation}\label{row.u.mieszane}
\frac{1}{\Pr}\le \frac{d}{dt}(u(t),\f) + b_S(u(t),u(t),\f)\pr + (\nabla u(t),\nabla \f) = 2N^2 (\rot \o(t),\f) + \Ra (\t(t) e_2,\f)
\end{equation}
for every $\f\in V_S$,
\begin{equation}\label{row.omega.mieszane}
\frac{1}{\Pr}\le \frac{d}{dt}(\o(t),\psi) + b(u(t),\o(t),\psi)\pr + 4N^2 (\o(t),\psi) + \frac{1}{L^2}(-\Delta\o(t), \psi) = 2N^2 (\rot u(t),\psi)
\end{equation}
for every $\psi\in H$,
\begin{equation}\label{row.teta.mieszane}
\frac{d}{dt}(\t(t),\eta) + b(u(t),\t(t),\eta) + (\nabla\t(t),\nabla\eta)  = D (\t(t),\rot \o(t) \cdot \nabla \eta) +D \le\frac{\partial\o}{\partial x_1}(t),\eta\pr + (u_2,\eta)
\end{equation}
for every $\eta\in V$, in the sense of scalar distributions on $(0,T)$.
Moreover, if $N^2L^2<1$ then such solution is unique and depends continuously on the initial conditions, i.e. the following map is continuous
\[
H_S\times V\times H \ni (u_0,\o_0,\t_0) \mapsto (u,\o,\t)\in C([0,T],H_S\times V\times H).
\]
\end{thm}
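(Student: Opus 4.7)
The approach is to combine the techniques of Theorems \ref{tw.istnienie.L2} and \ref{tw.istnienie.H1}, using that $\o_0$ carries one extra degree of regularity compared with $u_0$ and $\t_0$.

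\textbf{Existence.} I would run the Galerkin--Faedo scheme and first repeat the $L^2$-energy estimates of Theorem \ref{tw.istnienie.L2} to obtain
\[
u\in L^\infty(0,T;H_S)\cap L^2(0,T;V_S),\qquad \t\in L^\infty(0,T;H)\cap L^2(0,T;V);
\]
these do not require any smoothness of $\o_0$ beyond $\o_0\in H$. To upgrade the regularity of $\o$, I would test the $\o$-equation by $A\o$ as in Theorem \ref{tw.istnienie.H1}, observing that the coupling $2N^2(\rot u,A\o)$ produces only a harmless $C\|u\|^2$ on the right-hand side (via Young) and that the nonlinearity $b(u,\o,A\o)$ is controlled by $k_4k_5|u|^{1/2}\|u\|^{1/2}\|\o\|^{1/2}|A\o|^{3/2}$ via Ladyzhenskaya and \eqref{nier.grad}; both are Gronwall-admissible in view of the bounds on $u$ already obtained. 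This yields $\o\in L^\infty(0,T;V)\cap L^2(0,T;D(A))$. The time derivatives $du/dt\in L^2(0,T;V_S^\ast)$ and $d\o/dt\in L^2(0,T;H)$ are then bounded as in Theorems \ref{tw.istnienie.L2} and \ref{tw.istnienie.H1}; for $d\t/dt\in L^2(0,T;V^\ast)$ the only term needing a new estimate is $\rot\o\cdot\nabla\t$, which after integration by parts satisfies
\[
|(\rot\o\cdot\nabla\t,\eta)|=|(\t,\rot\o\cdot\nabla\eta)|\leq \|\t\|_{L^4}\|\rot\o\|_{L^4}\|\eta\|\leq C|\t|^{1/2}\|\t\|^{1/2}\|\o\|^{1/2}|A\o|^{1/2}\|\eta\|,
\]
so Cauchy--Schwarz in time gives $\|\rot\o\cdot\nabla\t\|_{L^2(0,T;V^\ast)}<\infty$.

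\textbf{Uniqueness and continuous dependence.} Assuming $N^2L^2<1$, I would take two solutions, form the difference $(u,\o,\t)=(\ov u-\wh u,\ov\o-\wh\o,\ov\t-\wh\t)$, and test the difference of the $u$-equations by $u$, the difference of the $\o$-equations by $A\o$ (permissible since $\o\in L^2(0,T;D(A))$), and the difference of the $\t$-equations by $\t$. The critical coupling is
\[
2N^2(\rot u,A\o)\leq 2N^2\|u\||A\o|\leq N^2 L^2\|u\|^2+\frac{N^2}{L^2}|A\o|^2,
\]
the last step being Young's inequality with the weight $L^2$. Because $N^2<1$, the $|A\o|^2$ piece is absorbed into $\frac{1}{L^2}|A\o|^2$ on the left-hand side of the $\o$-estimate, and the $N^2L^2\|u\|^2$ piece is absorbed into $\|u\|^2$ on the left-hand side of the $u$-estimate precisely thanks to $N^2L^2<1$. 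The symmetric coupling $2N^2(\rot\o,u)=2N^2(\o,\rot u)$ is handled by moving the derivative onto $u$, producing only Gronwall-type contributions $C|\o|^2$. The nonlinear differences $b_S(u,\wh u,u)$, $b(\ov u,\o,A\o)+b(u,\wh\o,A\o)$, $b(u,\ov\t,\t)$ and $D\int_\Omega(\rot\o\cdot\nabla\ov\t)\t$ are controlled by Ladyzhenskaya-type estimates, yielding coefficients that lie in $L^1(0,T)$ thanks to the bounds on the fixed solutions from the existence step. Summing the three estimates gives $y'\leq\alpha(t)y$ with $\alpha\in L^1(0,T)$ and $y:=|u|^2+\|\o\|^2+|\t|^2$, from which uniqueness (take $y(0)=0$) and continuous dependence follow exactly as at the end of the proof of Theorem \ref{tw.istnienie.H1}.

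\textbf{Main obstacle.} The crux is the asymmetric coupling: the $u$-equation is tested at the weak level (since $u_0\in H_S$) while the $\o$-equation is tested at the strong level (since $\o_0\in V$). The tailored Young splitting displayed above is exactly what makes absorption possible, and the threshold $N^2L^2<1$ is precisely the condition ensuring that the resulting coefficient $1-N^2L^2$ on $\|u\|^2$ remains positive after the two-sided absorption, so that the Gronwall scheme closes.
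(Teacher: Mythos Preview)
Your proposal is correct and follows essentially the same route as the paper: first invoke the $L^2$-estimates of Theorem~\ref{tw.istnienie.L2}, then test the $\o$-equation by $A\o$ and Gronwall to gain $\o\in L^\infty(0,T;V)\cap L^2(0,T;D(A))$, handle $d\t/dt\in L^2(0,T;V^\ast)$ via the $L^4$--$L^4$ bound on $(\t,\rot\o\cdot\nabla\eta)$, and for uniqueness test $(u,\o,\t)$ against $(u,A\o,\t)$ with the pivotal splitting $2N^2(\rot u,A\o)\leq N^2L^2\|u\|^2+\tfrac{N^2}{L^2}|A\o|^2$. The only cosmetic difference is that the paper assigns the bars and hats in the telescoping of the nonlinearities the other way around (e.g.\ $b(\wh u,\o,A\o)+b(u,\ov\o,A\o)$ rather than your $b(\ov u,\o,A\o)+b(u,\wh\o,A\o)$), which is immaterial.
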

\begin{proof}
The proof is similar in spirit to that of  Theorem \ref{tw.istnienie.L2} and Theorem \ref{tw.istnienie.H1}.
We restrict ourselves to present the sketch of the proof.

In view of Theorem \ref{tw.istnienie.L2}, we know that \eqref{warunk.ist.L2} holds.
First, we multiply \eqref{row.omega.2} by $A\o$ and integrate over $\Omega$ to get \eqref{oszacowanie.entr.omega}.
We use the kind of Gronwall's type inequality like in \eqref{nier.po.Gronwalu.2} and obtain 
\[
\o\in L^2(0,T;D(A))\cap L^\infty (0,T;V).
\]
Then we can show that $\dfrac{d\o}{dt}\in L^2(0,T;H)$ by showing that each term of \eqref{row.omega.2} belongs to $L^2(0,T;H)$.
Indeed, let us just mark that 
\[
A\o\in L^2(0,T;H)
\]
and 
\[
u\cdot \nabla \o \in L^2(0,T;H),
\]
because
\[
\|u\cdot\nabla \o\|_{L^2(0,T;H)}\leq C \|u\|_{L^\infty(0,T;H_S)}\|\o\|_{L^\infty(0,T;V)}.
\]

Now, we turn to $\t$ equation.
All we have to do is to show that $\dfrac{d \t}{dt}\in L^2(0,T;V^\ast)$.
We focus our attention on proving that $\rot\o\cdot\nabla\t \in L^2(0,T;V^\ast)$, since all the other terms from the right hand-side of \eqref{row.pochodna.teta} belong to $L^2(0,T;V^\ast)$ in a standard way.
Take $\f\in V$, we inegrate by parts, use H\"{o}lder's inequality \eqref{ladyzhenskaya} and \eqref{nier.grad}
\[
_V\langle \rot \o\cdot \nabla \t,\f\rangle_V = -\int_\Omega (\rot \o\cdot\nabla\f) \t \leq \|\rot \o \|_{L^4}\|\f\|\|\t\|_{L^4} \leq k_4k_5\|\o\|^{1/2}|A\o|^{1/2}|\t|^{1/2}\|\t\|^{1/2}\|\f\|.
\]
Thus, we end with
\[
\begin{split}
\|\rot\o\cdot\nabla\t\|^2_{L^2(0,T;V^\ast)}&\leq C\int_\Omega \|\o\||A\o||\t|\|\t\|\\
&\leq C\|\o\|_{L^\infty(0,T;V)}\|\o\|_{L^2(0,T;D(A))}\|\t\|_{L^\infty(0,T;H)}\|\t\|_{L^2(0,T;V)}
\end{split}
\]
what proves the regularity of solutions as claimed.

Now, let us assume that $N^2L^2<1$ and we have two solutions $(\ov u, \ov \o, \ov \t)$ and $(\ov u, \ov \o, \ov \t)$ (in the sense of Theorem \ref{tw.mieszane.war}) starting at $(\ov u_0, \ov \o_0,\ov \t_0)$ and $(\wh u_0,\wh \o_0, \wh \t_0)\in H_S\times V \times H$, respectively.
We set $(u,\o,\t) = (\ov u - \wh u , \ov \o - \wh \o, \ov \t - \wh \t)$ and from \eqref{row.u.mieszane}, \eqref{row.omega.mieszane} and \eqref{row.teta.mieszane} we obtain
\begin{multline}\label{nier.mieszane}
\frac{1}{2}\frac{d}{dt}\le \frac{1}{\Pr}\le |u|^2 + \|\o\|^2\pr + |\t|^2 \pr + \|u\|^2 + 4N^2\|\o\|^2 + \frac{1}{L^2}|A\o|^2+ \|\t\|^2 \\
= 2N^2 (\rot \o,u) + \Ra (\t e_2, u) - \frac{1}{\Pr} b_S(u, \ov u, u) \\
+2N^2(\rot u , A\o) - \frac{1}{\Pr}\le b(u,\ov \o,A\o) + b(\wh u, \o,A\o) \pr \\
+ D\le \frac{\partial \o}{\partial x_1} ,\t\pr + (u_2,\t) +  D\int_\Omega(\rot \o\cdot\nabla\ov\t)\t  - b(u,\ov\t,\t).
\end{multline}
We estimate every term of the rhs of \eqref{nier.mieszane}
\begin{gather*}
2N^2(rot\o,u)\leq 4N^2\|\o\|^2 +\frac{N^2}{4}|u|^2,\quad \Ra(\t e_2,u) \leq k_1\le |\t|^2 + |u|^2\pr,\\
-\frac{1}{\Pr}b_S(u,\ov u, u ) \leq \frac{1}{\Pr}\|u\|_{L^4}\|\ov u\| \|u\|_{L^4}\leq C|u|\|u\|\|\ov u\|\leq \frac{1-L^2N^2}{4}\|u\|^2 + k_2|u|^2\|\ov u\|^2,\\
2N^2 (\rot u, A\o)\leq L^2N^2\|u\|^2 + \frac{N^2}{L^2}|A\o|^2,\\
-\frac{1}{\Pr}b(u,\ov\o,A\o)\leq \frac{1}{\Pr}\|u\|_{L^4}\|\nabla\ov\o\|_{L^4}|A\o|\leq C |u|^{1/2}\|u\|^{1/2}\|\ov\o\|^{1/2}|A\ov\o|^{1/2}|A\o| \\
\leq \frac{1-N^2}{4L^2}|A\o|^2 + C^\prime |u|\|u\|\|\ov\o\||A\ov\o|\leq \frac{1-N^2}{4L^2}|A\o|^2 + \frac{1-L^2N^2}{4}\|u\|^2 + k_3|u|^2\|\ov\o\|^2|A\ov\o|^2,\\
-\frac{1}{\Pr}b(\wh u,\o,A\o) \leq C|\wh u|^{1/2}\|\wh u\|^{1/2}\|\o\|^{1/2}|A\o|^{3/2}\leq\frac{1-N^2}{4L^2}|A\o|^2 + k_4|\wh u|^2\|\wh u\|^2\|\o\|^2,\\
D\le \frac{\partial \o}{\partial x_1},\t \pr \leq k_5\le \|\o\|^2+ |\t|^2 \pr,\quad (u_2,\t)\leq k_6\le |u|^2 +|\t|^2\pr,\\
D\int_\Omega (\rot\o \cdot \nabla \ov\t)\t\leq C\|\nabla\o\|_{L^4}\|\ov\t\|\|\t\|_{L^4}\leq 2\min\left\{\frac{1-N^2}{4L^2},\frac{1}{4}\right\}|A\o|\|\t\| + C \|\o\||\t|\|\ov\t\|^2\\
\leq \frac{1-N^2}{4L^2}|A\o|^2+\frac{1}{4}\|\t\|^2 + k_7\le \|\o\|^2 + |\t|^2 \pr \|\ov\t\|^2,\\
-b(u,\ov\t,\t)\leq 2 \frac{1-L^2N^2}{4}\|u\|\|\t\|+ C|u||\t|\|\ov\t\|^2\leq \frac{1-L^2N^2}{4}\|u\|^2 +\frac{1}{4}\|\t\|^2 + k_8\le |u|^2 +|\t|^2 \pr \|\ov\t\|^2.
\end{gather*}
Hence and from \eqref{nier.mieszane} we get
\[
\frac{d}{dt}\le  |u|^2 + \|\o\|^2 + |\t|^2 \pr \leq C\le |u|^2 + \|\o\|^2 + |\t|^2\pr\le 1+ \|\ov u\|^2+ \|\ov\o\|^2|A\ov\o|^2+\|\ov\t\|^2+|\wh u |^2\|\wh u\|^2\pr, 
\]
for a suitable constant $C$, what proves the assertion.
\end{proof}

\begin{thm}\label{tw.mieszane.war2}
Let $T>0$,  $u_0\in V_S$, $\o_0\in V$ and $\t_0\in H$.
There is a unique solution  $(u,\o,\t)$ to \eqref{row.u.2}--\eqref{war.pocz} in the sense that
\begin{align*}
u&\in L^2(0,T;D(A_S))\cap C([0,T],V_S) \cap W^{1,2}(0,T;H_S),\\
\o&\in L^2(0,T;D(A))\cap C([0,T],V)\cap W^{1,2}(0,T;H),\\
\t&\in L^2(0,T;V)\cap C([0,T];H)\cap W^{1,2}(0,T;V^\ast)
\end{align*}
such that $u(0)=u_0$, $\o(0)=\o_0$, $\t(0)=\t_0$ and satisfying the following identities
\[
\frac{1}{\Pr}\le \frac{d}{dt}(u(t),\f) + b_S(u(t),u(t),\f)\pr + (-\Delta u(t), \f) = 2N^2 (\rot \o(t),\f) + \Ra (\t(t) e_2,\f)
\]
for every $\f\in H_S$,
\[
\frac{1}{\Pr}\le \frac{d}{dt}(\o(t),\psi) + b(u(t),\o(t),\psi)\pr + 4N^2 (\o(t),\psi) + \frac{1}{L^2}(-\Delta\o(t), \psi) = 2N^2 (\rot u(t),\psi)
\]
for every $\psi\in H$,
\[
\frac{d}{dt}(\t(t),\eta) + b(u(t),\t(t),\eta) + (\nabla\t(t),\nabla\eta)  = D (\t(t),\rot \o(t) \cdot \nabla \eta) +D \le\frac{\partial\o}{\partial x_1}(t),\eta\pr + (u_2,\eta)
\]
for every $\eta\in V$, in the sense of scalar distributions on $(0,T)$.
Moreover, the following map is continuous
\[
V_S\times V\times H \ni (u_0,\o_0,\t_0) \mapsto (u,\o,\t)\in C([0,T],V_S\times V\times H).
\]
\end{thm}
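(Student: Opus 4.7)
The proof follows the Galerkin--Faedo scheme of the three previous theorems, combining their a~priori estimates. Since our hypotheses are stronger than those of Theorem~\ref{tw.istnienie.L2}, the weak regularity \eqref{warunk.ist.L2} holds from the outset. Multiplying \eqref{row.omega.2} by $A\o$, integrating, and running the Gronwall argument of \eqref{oszacowanie.entr.omega}--\eqref{nier.po.Gronwalu.2} upgrades $\o$ to $L^\infty(0,T;V)\cap L^2(0,T;D(A))$, exactly as in Theorem~\ref{tw.mieszane.war}. Next, multiplying \eqref{row.u.2} by $A_Su$, the nonlinearity is controlled through \eqref{Agmon.S} as in \eqref{oszacowanie.entr.u}, and the cross terms $2N^2(\rot\o,A_Su)$ and $\Ra(\t e_2,A_Su)$ are bounded by $\tfrac{1}{6}|A_Su|^2+C\|\o\|^2$ and $\tfrac{1}{6}|A_Su|^2+C|\t|^2$ respectively; the latter is admissible because $\t\in L^\infty(0,T;H)$ was obtained in the first step. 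Gronwall's lemma yields $u\in L^\infty(0,T;V_S)\cap L^2(0,T;D(A_S))$. The time derivative bounds $du/dt\in L^2(0,T;H_S)$, $d\o/dt\in L^2(0,T;H)$, $d\t/dt\in L^2(0,T;V^\ast)$ are then routine term-by-term estimates as in Theorems~\ref{tw.istnienie.H1} and~\ref{tw.mieszane.war}; the only delicate piece, $D\rot\o\cdot\nabla\t\in L^2(0,T;V^\ast)$, is handled by the same integration-by-parts-against-$\f\in V$ argument as in the proof of Theorem~\ref{tw.mieszane.war}, followed by \eqref{nier.grad} and \eqref{ladyzhenskaya}.

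\textbf{Uniqueness and continuous dependence.} For two solutions $(\ov u,\ov\o,\ov\t)$, $(\wh u,\wh\o,\wh\t)$ I write the difference $(u,\o,\t)$ and test the respective difference equations against $A_Su$, $A\o$, and $\t$. Every nonlinear, cross-coupling, and buoyancy term is bounded via \eqref{ladyzhenskaya}, \eqref{nier.grad}, \eqref{Agmon.S} followed by Young's inequality, so that the diffusive quantities $|A_Su|^2$, $|A\o|^2$, and $\|\t\|^2$ are partially absorbed into the LHS while the remainders take the form $\alpha(t)y(t)$, where $y:=\|u\|^2+\|\o\|^2+|\t|^2$ and $\alpha$ is built from combinations like $\|\ov u\|^2|A_S\ov u|^2$, $|A_S\wh u|^2$, $\|\ov\o\|^2|A\ov\o|^2$, $|\wh u|^2\|\wh u\|^2$, and $|\wh\t|^2\|\wh\t\|^2$, all lying in $L^1(0,T)$ by the analogue of \eqref{oszacowanie.norm.3}. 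A key observation is that the simultaneous availability of $|A_Su|^2$ and $|A\o|^2$ on the LHS lets the $u$--$\o$ cross-coupling $2N^2(\rot\o,A_Su)+2N^2(\rot u,A\o)$ be absorbed by plain Young's inequality, so the smallness condition $N^2L^2<1$ required in Theorem~\ref{tw.mieszane.war} is not needed here.

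\textbf{The main obstacle} is the difference of the cubic term, which I decompose as
\[
D\int_\Omega(\rot\ov\o\cdot\nabla\ov\t)\t-D\int_\Omega(\rot\wh\o\cdot\nabla\wh\t)\t
= D\int_\Omega(\rot\ov\o\cdot\nabla\t)\t+D\int_\Omega(\rot\o\cdot\nabla\wh\t)\t.
\]
The first integral vanishes by the same symmetry-of-second-derivatives computation that showed $I_1=I_2=0$ in the proof of Theorem~\ref{tw.istnienie.L2}: namely $\int_\Omega\rot v\cdot\nabla f=0$ whenever $v$ vanishes on $\Gamma_h$ and is $l$-periodic on $\Gamma_l$, applied with $v=\ov\o$ and $f=\t^2/2$. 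The same identity with $v=\o$ and $f=\wh\t\,\t$ rewrites the second integral as $-D\int_\Omega(\rot\o\cdot\nabla\t)\wh\t$, which by \eqref{nier.grad} and \eqref{ladyzhenskaya} is dominated by $C\|\o\|^{1/2}|A\o|^{1/2}\|\t\||\wh\t|^{1/2}\|\wh\t\|^{1/2}$. Young's inequality with exponents $(4,2,4)$ splits this as
\[
\e_1|A\o|^2+\e_2\|\t\|^2+C_{\e_1,\e_2}\|\o\|^2|\wh\t|^2\|\wh\t\|^2,
\]
the first two pieces being absorbed into the LHS; the coefficient $|\wh\t|^2\|\wh\t\|^2$ of $\|\o\|^2$ lies in $L^1(0,T)$ because $\wh\t\in L^\infty(0,T;H)\cap L^2(0,T;V)$. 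Gronwall's lemma applied to the resulting $y'(t)\leq\alpha(t)y(t)$ with $\alpha\in L^1(0,T)$ then delivers both uniqueness and continuous dependence of the solution map in $C([0,T],V_S\times V\times H)$.
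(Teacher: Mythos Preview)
Your sketch is correct and is exactly the argument the paper intends: its own proof of this theorem consists of one sentence pointing back to Theorems~\ref{tw.istnienie.L2}, \ref{tw.istnienie.H1} and~\ref{tw.mieszane.war}, and you have carried out precisely that combination---the $L^2$ energy identity for $\t$ together with the $A_Su$ and $A\o$ estimates, followed by the $\rot\o\cdot\nabla\t\in L^2(0,T;V^\ast)$ bound from Theorem~\ref{tw.mieszane.war}. Your observation that testing the $u$-difference against $A_Su$ (rather than $u$) absorbs the cross term $2N^2(\rot u,A\o)$ without the smallness hypothesis $N^2L^2<1$ is the right explanation for why that hypothesis is absent here; and your treatment of the cubic difference---vanishing of $\int_\Omega(\rot\ov\o\cdot\nabla\t)\t$ plus the rewriting $\int_\Omega(\rot\o\cdot\nabla\wh\t)\t=-\int_\Omega(\rot\o\cdot\nabla\t)\wh\t$ and the $(4,2,4)$-Young split---is a clean variant of the estimate the paper uses in Theorem~\ref{tw.mieszane.war} (there the decomposition is $\rot\o\cdot\nabla\ov\t+\rot\wh\o\cdot\nabla\t$, with the second integral vanishing instead of the first).
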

\begin{proof}
The proof is similar to the proofs of the previous theorems and so it is left to the reader.
\end{proof}

\end{document}